\documentclass[a4paper]{mjm}
\usepackage[dvipsnames]{color}
\usepackage{amsmath,amsfonts}
\usepackage{latexsym}
\usepackage[latin1]{inputenc}

\DeclareMathOperator{\ind}{ind} 
\DeclareMathOperator{\im}{Im}
\newcommand*{\boundary}{\partial}

\title{Families index for\\Boutet de Monvel operators}
\headtitle{Families of Boutet de Monvel operators}
\author{Severino T. Melo \and Thomas
  Schick \and Elmar Schrohe}
\headauthor{S. Melo, T. Schick, E. Schrohe}

\address{
Severino T. Melo\\ Instituto de  Matem{\'a}tica e Estat{\'\i}stica,
Universidade de S{\~a}o Paulo\\
 Rua do Mat{\~a}o 1010,
05508-090 S{\~a}o Paulo, Brazil|}
 \email{toscano@ime.usp.br}
\urladdr{http://www.ime.usp.br/~toscano}

 \address{Thomas Schick\\ Mathematisches Institut, Georg-August-Universit\"at
     G\"ottingen\\  Bunsenstr.\ 3-5, 
37073 G{\"o}ttingen, Germany}
\email{schick@uni-math.gwdg.de}
  \urladdr{http://www.uni-math.gwdg.de/schick}

\address{Elmar Schrohe\\ Institut f\"ur Analysis, Leibniz Universit\"at Hannover\\ Welfengarten 1, 
30167 Hannover, Germany} \email{schrohe@math.uni-hannover.de}
\urladdr{http://www.analysis.uni-hannover.de/~schrohe/}

\IfFileExists{META}{\input{META}}{}
\usepackage{mjm}

\newtheorem{thm}{Theorem}
\newtheorem{theorem}[thm]{Theorem}
\newtheorem{pro}[thm]{Proposition}
\newtheorem{proposition}[thm]{Proposition}

\newtheorem{corollary}[thm]{Corollary}
\newtheorem{df}[thm]{Definition}
\newtheorem{definition}[thm]{Definition}
\theoremstyle{definition}
\newtheorem{remark}[thm]{Remark}

\begin{document}
\newcommand{\iso}{\cong}
\newcommand{\A}{{\mathcal A}}
\newcommand{\ac}{{\mathfrak A}}
\newcommand{\Ps}{{\mathcal P}}
\newcommand{\pc}{{\mathfrak P}}
\newcommand{\kc}{{\mathfrak K}}
\newcommand{\hc}{{\mathfrak H}}
\newcommand{\R}{{\mathbb R}}
\newcommand{\reals}{{\mathbb R}}
\newcommand{\C}{{\mathbb C}}
\newcommand{\complexs}{{\mathbb C}}
\newcommand{\N}{{\mathbb N}}
\newcommand{\naturals}{{\mathbb N}}
\newcommand{\Z}{{\mathbb Z}}
\newcommand{\rn}{{\mathbb R}^{n}}
\newcommand{\gp}{G_{\oplus}}
\newcommand{\diff}{\mbox{Diff}}
\newcommand{\op}{operator}
\newcommand{\ops}{operators}
\newcommand{\psd}{pseudo\-dif\-fer\-en\-tial}
\newcommand{\Psd}{Pseudo\-dif\-fer\-en\-tial}
\newcommand{\cqd}{\hfill$\Box$}
\newcommand{\h}{{\mathcal H}}
\newcommand{\pf}{{\em Proof}: }
\newcommand{\asim}{\!\!\sim}


\begin{abstract}
We define the analytical and the topological indices for continuous 
families of operators in the C$^*$-closure of the Boutet de Monvel
algebra. Using techniques of C$^*$-algebra K-theory and the
Atiyah-Singer theorem for families of elliptic operators on a closed manifold, 
we prove that these two indices coincide.
\end{abstract}
\maketitle
\begin{center}{\footnotesize 2010 Mathematics Subject Classification: 19K56, 46L80, 58J32}
\end{center}

\thispagestyle{empty}
\section*{Introduction}
Boutet de Monvel's calculus \cite{B} provides a pseudodifferential framework 
which encompasses the classical differential boundary value problems. 
In an extension of the concept of Lopatinski and Shapiro, it associates 
to each operator two  symbols: 
a pseudodifferential principal symbol, which is a bundle homomorphism, and  
an operator-valued boundary symbol.
Ellipticity requires the invertibility of both. In this case, the calculus allows the 
construction of a parametrix. 
If the underlying manifold is compact, elliptic elements define Fredholm operators,
and the parametrices are Fredholm inverses. 
Boutet de Monvel showed how then the index can be computed in 
topological terms. 
The crucial observation is that elliptic operators can be mapped to compactly 
supported K-theory classes on the cotangent bundle over the interior of the 
manifold. 
The topological index map, applied to this class, then furnishes an integer which 
is equal to the index of the operator.

For the construction of the above map, Boutet de Monvel combined 
operator homotopies and classical (vector bundle) K-theory in a
very refined way. It therefore came as a surprise that this map -- 
which is neither obvious nor trivial -- 
can also be obtained as a composition of various standard 
maps in K-theory for C$^*$-algebras  
-- which was not yet available when \cite{B} was written. 
In fact, it turns out to be basically sufficient to have 
a precise understanding of the short exact sequence induced by the 
boundary symbol map, \cite{MSS}, see also \cite{MNS}. 

In the spirit of the classical result of Atiyah and Singer \cite{AS} 
we introduce and consider in this article {\em families} of operators in 
Boutet de Monvel's calculus, an issue that has not been addressed in \cite{B}.

More specifically, we consider a compact manifold $X$ with boundary and then a
fiber bundle $Z\to Y$ with fiber $X$ over a compact Hausdorff space $Y$. We
are then studying fiberwise (elliptic) Boutet de Monvel operators, depending
continuously on $y\in Y$.
In order to be able to use the powerful tools of C$^*$-algebra K-theory 
we define such an operator family $A$ over $Y$
as a continuous section	 of a bundle of C$^*$-algebras over $Y$, 
a concept which is slightly more general than that of Atiyah and
Singer,  who equip the set of operators with a Fr\'echet-space topology.
In fact, restricted to the case without boundary,  
our algebra of continuous families $\ac$ contains that of \cite{AS} 
as a dense subalgebra.

While the analytic index $\ind_a(A)$ of such an elliptic family $A$ as an element of $K(Y)$ 
is easily defined following Atiyah \cite{A} and J\"anich \cite{J}, 
cf.~Definition \ref{ani} below,
it is less obvious how to obtain the topological description.
Similar to Boutet de Monvel's approach, the essential step is the
construction of a map which associates to an elliptic family an element of 
the compactly supported K-theory of the total space of the bundle of cotangent spaces over the interior of the underlying manifolds. 
We regard this map as a homomorphism defined on $K_1(\ac/\kc)$, 
where $\kc$ denotes the ideal of continuous families which have values in
compact operators. 
In its definition, we use a fact which builds upon an observation of Boutet 
de Monvel: There exists a natural subalgebra $\ac^\dagger$ of $\ac$ for which 
$K_*(\ac^\dagger/\kc)\cong K_*(\ac/\kc)$ so that each elliptic family $A$ in
$\ac$  
can be represented by a class $a\in K_1(\ac^\dagger/\kc)$. 
Moreover, $\ac^\dagger/\kc$ is commutative which allows us to make
the connection to classical (vector bundle) K-theory. Then $\ind_t(A)$ is
defined by applying the classical construction of the topological index to
$a$, compare Definition \ref{indt}.

Our main result is then that these two indices are equal. To
prove this, we reduce to the classical families index theorem of Atiyah and
Singer \cite{AS}. We assign in a canonical way to $A$
an index problem on a bundle of closed manifolds, namely the double of our 
original bundle of manifolds with boundary. We then show that this associated
family has the same analytic as well as topological index as $A$. 
In this step we make once more use of the isomorphism 
$K_1(\ac/\kc)\cong K_1(\ac^\dagger/\kc)$.

It is perhaps worth stressing that our index theorem does not use the Boutet de Monvel index theorem for boundary value problems, which can actually be obtained from ours by taking $Y$ equal to one point.
Taking the families index theorem for granted, Albin and Melrose 
derived a more refined formula for the Chern character of the index bundle 
in terms of symbolic data \cite[Theorem 3.8]{AM}. 

The paper is structured as follows: 
Section~\ref{sec:BdM_single} starts with a review of the Boutet de Monvel
calculus for a single manifold. We introduce the
C$^*$-algebra $\mathcal{A}$ of Boutet de Monvel operators of order and class
zero and the boundary symbol map $\gamma$.  
Section~\ref{sec1} gives  the technical introduction of operator families in Boutet de
Monvel's calculus over a compact Hausdorff space $Y$. 
We define them as the continuous sections into a bundle of operator algebras 
whose typical fiber is the C$^*$-algebra $\mathcal A$. 
In order to keep the exposition simple, we first treat the case where $E$ is trivial 
one-dimensional and $F=0$.
We introduce $\gamma$ as the fiberwise symbol map and extend the results on 
the kernel and image of $\gamma$ to the family situation.
 
While in the single operator case this was sufficient to compute the K-theory
of $\mathcal A/\mathcal K$, the situation is more complicated in the families case. 
In fact, an important ingredient in \cite{MSS}
is that fact that whenever $X$ is connected  
and $\partial X\not=\emptyset$ there exists a continuous section of $S^*X^\circ$. 
This is no longer true in the families case.
Instead, we prove in Theorem  \ref{Kth} the fact alluded to above: 
For $F=0$ we define $\ac^\dagger$ as the C$^*$-algebra 
generated by all sections whose 
pseudodifferential part is independent of the co-variable at the boundary 
and whose singular Green part vanishes. Then
$\ac^\dagger/\kc$ is commutative. Moreover, we use a
  Mayer-Vietoris argument to show that the inclusion map induces an 
isomorphism
\begin{equation}\label{K}
K_*(\mathfrak{A^\dagger /K})\cong K_*(\mathfrak{A/K}). 
\end{equation}

In Section~\ref{index} we study the index problem. 
Again, we confine ourselves first to the case of trivial 
one-dimensional bundles. We introduce the analytic and topological index 
and, as our main result, prove that the analytic and the
  topological index  are equal. To achieve this, we reduce with the help of a
  doubling procedure to the case of families of closed manifolds. This
  reduction is based
on the fact that we can use the isomorphism in \eqref{K} to represent any
element of  $K_1(\ac/\kc)$ as a $K_1$-class of $\mathfrak{A^\dagger /K}$.
In Section~\ref{sec:non-trivial_bundles} we finish by explaining the arguments needed for the general situation. 

Two appendices give technical details about the structure group of our
families and about the K\"unneth theorem we are using.

\section{Boutet de Monvel calculus for a single manifold}
\label{sec:BdM_single}

In this section, we introduce notation and recall the case of single 
operators. Details can be found in the monographs of 
Rempel and Schulze \cite{RS} and Grubb \cite{G} as well as in the 
short introduction \cite{S3}.

Let $X$ be a compact manifold of dimension $n$ with boundary $\partial X$
and interior $X^\circ$. We equip $X$ with a {\em collar} (i.e, a neighborhood 
$U$ of the boundary and a diffeomorphism $\delta\colon U\to\partial
X\times[0,1)$) 
which then induces the {\em boundary defining function $x_n=pr_{[0,1)}\circ \delta$}
The variables of $\partial X$ will be  denoted $x'$. 
The collar is used to provide the double $2X$ of $X$
with a (noncanonical) smooth structure.
Recall that $2X$ is the union of two copies $X^+$
and $X^-$ of $X$ quotiented by identification of the two copies of $\partial X$. 
 
An element in Boutet de Monvel's calculus is a matrix of operators
\begin{eqnarray}\label{eq.1}
A = \begin{pmatrix}P_++G&K\\T&S\end{pmatrix} \colon
\begin{array}{ccc}C^\infty(X,E_1)&& C^\infty(X,E_2)\\
\oplus&\longrightarrow &\oplus\\
C^\infty(\partial X,F_1)&& C^\infty(\partial X,F_2)\end{array},
\end{eqnarray}
acting between sections of vector bundles $E_1, E_2$ over $X$ and 
$F_1, F_2$ over $\partial X$. 
In this article we shall focus on the case of endomorphisms,
where $E_1=E_2=E$ and $F_1=F_2=F$.
For convenience, we choose a Riemannian
metric $g$ on $M$ and Hermitean metrics on $E,F$ to later obtain fixed Hilbert
spaces structures, although the results do not depend on these choices.
The operator $P_+$ in the upper left corner is a truncated 
pseudodifferential operator, derived from a (classical) pseudodifferential 
operator $P$ on $2X$. 
Given $u\in C^\infty(X,E)$, $P_+u$ is defined as the
composition  $r^+Pe^+u$. Here $e^+$ extends $u$ by zero to a function on $2X$, 
to which $P$ is  applied. The result then is restricted (via $r^+$) to $X$. 
In general it is not true that  $P_+u\in C^\infty(X,E)$. In order to ensure this, 
$P$ is required to satisfy the \emph{transmission condition}: 
If $p\sim\sum p_j$ is the asymptotic expansion of the local symbol $p$ of $P$ into terms $p_j(x,\xi)$, 
which are positively homogeneous of degree $j$ in $\xi$ 
one requires that, for $x_n=0$ and $\xi=(0,\pm1)$ one has
$D_{x}^\beta D^\alpha_\xi p_j(x',0, 0,1) = (-1)^{j-|\alpha|}
D_{x}^\beta D^\alpha_\xi p_j(x',0, 0,-1)$.
As for the remaining entries, 
$G$ is a singular Green operator, $T$ a trace operator, $K$ a 
potential operator, and $S$ a pseudodifferential operator on the 
boundary. 

Operators in Boutet de Monvel's calculus have an {\em order} and 
a {\em class} or {\em type}. There are invertible elements in the calculus
which allow us to reduce both, order and class, to zero. 
The operators then form a $*$-subalgebra
of the bounded operators on the Hilbert space 
$H:=L^2(X,E) \oplus L^2(\partial X, F)$.

\begin{df}\label{A}
Let $\A^\circ(E,F)$ denote the algebra of the 
(polyhomogeneous) Boutet de Monvel operators of order and class zero on
$H=L^2(X,E)\oplus L^2(\partial X,F)$,
endowed with its natural Fr\'echet topology,  
and $\A$ its C$^*$-closure in the algebra of all bounded operators on $H$.
We write $\A^\circ$ and $\A$ if $E=X\times\complexs$ is trivial 
one-dimensional and $F=0$.
\end{df}

Let $A\in \A^\circ(E,F)$ be given as in \eqref{eq.1}. For each entry
$P,S,G,T,K$ we have a symbol.
This is the usual one for $P$ and $S$, while 
$G$, $T$, and $K$ can be considered
as operator-valued pseudodifferential operators on $\partial X$ with classical
symbols in the sense of Schulze \cite{s91}.

These are defined as follows, see \cite{S3}: 
The principal pseudodifferential symbol $\sigma(A)$ of $A$ 
is the restriction of the principal symbol of $P$ to the cosphere bundle over $X$. 
In order to define the boundary principal symbol $\gamma(A)$
we first denote by $p^0$, $g^0$, $t^0$, $k^0$, and $s^0$ 
the principal symbols of $P$, $G$, $T$, $K$, and $S$, 
respectively. 
We let $E_{x',\xi'}^0$ be the pullback of $E|_{\{x_n=0\}}$ to 
the normal bundle of $X$,  lifted to $(x',\xi')\in S^*\partial X$. 
For fixed $(x',\xi')\in S^*\partial X$, $\xi_n\mapsto p^0(x',0,\xi',\xi_n)$ 
is a function on the conormal line in $(x',\xi')$, acting on $E^0_{x',\xi'}$.
It induces a truncated pseudodifferential operator  
\begin{equation*}
p^0(x',0,\xi',D_n)_+ = r^+p^0(x',0,\xi',D_n)e^+\colon
L^2(\R_{\ge0},E^0_{x',\xi'})
\to L^2(\R_{\ge0},E^0_{x',\xi'}).
\end{equation*}
In local coordinates near the boundary we then define the boundary principal
symbol $\gamma(A)(x',\xi')\colon L^2(\R_{\ge0},E^0_{x',\xi'})
  \oplus F_{x',\xi'}\to L^2(\R_{\ge0},E^0_{x',\xi'})\oplus 
F_{x',\xi'}$ by
\begin{equation}\label{sec:def_of_boundary_symb}
\gamma(A)(x',\xi'):=\begin{pmatrix}p^0(x',0,\xi',D_n)_+ +g^0(x',\xi',D_n)&k^0(x',\xi',D_n)\\
t^0(x',\xi',D_n)&s^0(x',\xi')\end{pmatrix}, 
\end{equation}
with $D_n$ indicating that we let the symbol act as an operator with respect to the variable $x_n$ only.
Note that the operator $g^0(x',\xi',D_n)$ is compact and that $k^0(x',\xi^\prime,D_n)$, 
$t^0(x',\xi',D_n)$ and $s^0(x',\xi')$ even have finite rank. 
The operator $p^0(x',0,\xi',D_n)_+$ on the other hand is a Toeplitz type 
operator; it will not be compact unless $p^0=0$. 

Denoting by $\mathcal K= \mathcal K(H)$ the ideal of compact operators on 
$\mathcal L(H)$, one has the following important estimate based on work by 
Gohberg \cite{Goh}, Seeley \cite{See} and Grubb-Geymonat \cite{GG},
see  \cite[2.3.4.4, Theorem 1]{RS} for a proof:
\begin{eqnarray}\label{normestimate}
\inf_{K\in\mathcal K} \|A+K\|= 
\max\{\|\sigma(A)\|_{\sup}, \|\gamma(A)\|_{\sup}\},
\end{eqnarray} 
where the sup-norms on the right hand side are over the cosphere bundles 
in $X$ and $\partial X$, respectively.
This estimate implies, in particular, that both symbols extend continuously to 
C$^*$-algebra homomorphisms defined on $\mathcal A(E,F)$. 
For fixed $(x',\xi')$ the range  $\{\gamma(A)(x',\xi')\mid A\in\mathcal A\}$ 
forms an algebra of Wiener-Hopf type operators. 

It also follows from this estimate that $\gamma$ vanishes on $\mathcal K$. 
Since the entries of $\gamma(A)(x^\prime,\xi^\prime)$  induced by $g^0$, $k^0$, $t^0$ and $s^0$
are (pointwise) compact while that induced by $p^0$ is not (unless $p^0=0$), 
we conclude that a Boutet de Monvel operator $A$ belongs to $\ker \gamma$  
if and only if $\sigma(A)$ vanishes at the boundary. 
Based on this observation (see \cite[Section~2]{MNS} for
details) one can show that $\sigma$ induces an isomorphism
\begin{equation}\label{eq:ker_gamma_single_mf}
 \ker \gamma/\mathcal K\cong C_0(S^*X^\circ).
\end{equation}

The K-theory of the range of $ \gamma$ was described in 
\cite[Section~3]{MNS}. 
Let ${\tt b}\colon C(\partial X)\to \im\gamma$ denote the C$^*$-homomorphism 
that maps $g$ to $\gamma(m(f))$, where $m(f)$ is the operator of multiplication 
by a function $f\in C(X)$ whose restriction to $\partial X$ equals $g$. 
Then {\tt b} induces a K-theory isomorphism.

\section{K-Theory of the families C$^*$-algebra}\label{sec1}

To simplify the exposition, we shall assume in this section that $E=X\times
\complexs$ is the trivial one-dimensional line bundle and $F=0$.
 
Let $\diff(X)$ denote the group of diffeomorphisms of $X$, equipped with its
usual Fr\'echet topology. Recall that $\delta\colon U\to
\boundary X\times [0,1)$ is the collar fixed at the beginning of Section
\ref{sec:BdM_single}. Let $G$ denote \label{defG} the subgroup of $\diff(X)$
consisting of those $\phi$ such that 
$\delta\circ\phi\circ\delta^{-1}\colon\partial X\times[0,1/2)\to\partial X\times[0,1)$ 
is of the form $(x^\prime,x_n)\mapsto (\varphi(x^\prime),x_n)$ for some diffeomorphism 
$\varphi\colon\partial X\to \partial X$. 
We are going to use two properties that each $\phi\in G$  satisfies: 
the boundary defining function is preserved ($x_n\circ\phi=x_n$ for
$0\le x_n\le 1/2$),  
and the canonical map $2\phi\colon 2X\to 2X$, defined by 
$2\phi\circ i_\pm=i_\pm\circ\phi,$ 
where $i_\pm\colon X^{\pm}\to 2X$ are the two canonical embeddings of $X$ in $2X$, 
is a diffeomorphism of $2X$. 

Throughout this paper, $\pi\colon Z\to Y$ will denote a fiber bundle
over the compact Hausdorff space $Y$ with fiber $X$ and structure group $G$.
Note, however, that this choice of structure group 
 is just for convenience and can always be (essentially uniquely) arranged
for a general bundle with typical fiber $X$, see the Appendix
\ref{sec:structgroup} for details. 

We denote $Z_y:=\pi^{-1}(y)$. 
Each $Z_y$ is a smooth  manifold with boundary, non-canonically 
diffeomorphic to $X$.
The restriction of $\pi$ to $\partial Z=\cup_y\partial Z_y$ 
is a fiber bundle $\pi_\partial\colon\partial Z\to Y$ with fiber $\partial X$ and 
structure group $\diff(\partial X)$. 

Next we define a bundle of Hilbert spaces, and later a C$^*$-algebra which
will act on its space of sections. This is a bit delicate, as it depends on
some further choices; therefore we give the details. We choose a continuous
family of Riemannian metrics
$(g_y)_{y\in Y}$ with corresponding measures $\mu_y$ on $Z_y$
  and define $H_y:= L^2(Z_y,\mu_y)$. Recall that such a family $(g_y)$ exists:
we can patch them together using trivializations of the bundle and a partition
of unity on $Y$, as the space of Riemannian metrics on $X$ is convex.

The union $\mathfrak{H}=\bigcup_{y\in Y}H_y$
is a fiber bundle of topological vector spaces over $Y$, canonically
associated to $\pi\colon Z\to Y$, with trivializations induced from the
trivializations of $\pi$ in the obvious way. The structure group is the group
of invertible bounded operators on $H$, \emph{equipped with the
  strong topology}.

\begin{remark}\rm 
That we obtain here the strong topology and not the norm topology comes from
the fact that the changes of trivialization are implemented by pullback with
the diffeomorphisms of $G$, and this is continuous in the strong, but not
the norm topology. This makes our considerations about bundles of operators
later quite cumbersome and requires to use the fact that we deal with
pseudodifferential operators.
\end{remark}

Moreover, the choice $(g_y)_{y\in Y}$ 
gives rise to a continuous family of inner products on $\mathfrak{H}$
inducing the given topology of the fibers $H_y$.

Let $\A_y$ be the Boutet de Monvel algebra of order and class zero on
$L^2(Z_y)$. We want to define the bundle of Boutet de Monvel algebras 
$\aleph=\bigcup_{y\in Y}\A_y$ as locally trivial bundle with structure group
the automorphism group of the C$^*$-algebra $\A$ with the
\emph{norm topology}, associated to $Z\to Y$. 

To achieve this, we need the diffeomorphism invariance of
  the Boutet de Monvel algebra in a precise form.

\begin{definition}
  Given $\phi\in G$, let $T_\phi$ 
denote the bounded operator on $L^2(X)$ defined by $f\mapsto f\circ\phi^{-1}$.
\end{definition}

\begin{proposition}\label{prop:conj_is_cont}
  We have a well defined continuous action (for
  the   Fr\'echet topology on $G$ and the norm topology on $\A$)
  \begin{equation*}
    G\times \A\ni (\phi,A) \mapsto T_\phi A T^{-1}_\phi \in\A. 
  \end{equation*}
  Moreover, by restriction we get an action $G\times \A^\circ\to \A^\circ$. 
\end{proposition}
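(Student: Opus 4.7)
The plan is to establish the proposition first on the Fr\'echet-dense subalgebra $\mathcal{A}^\circ$, where diffeomorphism invariance is classical, and then to upgrade to $\mathcal{A}$ by a three-$\varepsilon$ density argument. Well-definedness on $\mathcal{A}^\circ$ rests on the specific form of the structure group (page~\pageref{defG}): each $\phi\in G$ preserves the boundary defining function $x_n$ near $\partial X$ and lifts to a diffeomorphism $2\phi$ of $2X$. Given $A\in\mathcal{A}^\circ$ with pseudodifferential part $P_+=r^+Pe^+$, the identities $T_\phi r^+=r^+T_{2\phi}$ and $e^+T_\phi^{-1}=T_{2\phi}^{-1}e^+$ give $T_\phi P_+T_\phi^{-1}=(T_{2\phi}PT_{2\phi}^{-1})_+$; the transmission condition and the Boutet de Monvel class are preserved under conjugation by $2\phi$ by the standard change-of-variable formulas (see \cite{RS} or \cite{G}), since $2\phi$ is a product map near $\partial X$, and the singular Green part transforms to another singular Green operator for the same reason.

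For continuity on $\mathcal{A}^\circ$ I would use these change-of-variable formulas more quantitatively. The local symbols of $T_\phi AT_\phi^{-1}$ are obtained from those of $A$ by the usual asymptotic composition formulas involving the Jacobian and the derivatives of $\phi$ and $\phi^{-1}$; a Fa\`a di Bruno bookkeeping then shows that each Fr\'echet seminorm of $T_\phi AT_\phi^{-1}$ is bounded by a continuous function of finitely many Fr\'echet seminorms of $A$ and finitely many $C^k$-seminorms of $\phi,\phi^{-1}$. Hence the map $G\times\mathcal{A}^\circ\to\mathcal{A}^\circ$ is jointly continuous for the Fr\'echet topologies. Composing with the norm-continuous embedding $\mathcal{A}^\circ\hookrightarrow\mathcal{A}$, whose continuity is a consequence of \eqref{normestimate} and Calder\'on--Vaillancourt-type bounds, yields joint continuity of $G\times\mathcal{A}^\circ\to\mathcal{A}$ in the norm topology; this already gives the second assertion.

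To extend the action to all of $\mathcal{A}$, fix $(\phi_0,A_0)\in G\times\mathcal{A}$ and $\varepsilon>0$. Since $\phi\mapsto|\det d\phi|$ is $C^0$-continuous on $G$, there are a neighborhood $U$ of $\phi_0$ and a constant $M$ with $\|T_\phi\|,\|T_\phi^{-1}\|\le M$ for all $\phi\in U$. Choose $A_0'\in\mathcal{A}^\circ$ with $\|A_0-A_0'\|<\varepsilon/(6M^2)$, shrink $U$ so that $\|T_\phi A_0'T_\phi^{-1}-T_{\phi_0}A_0'T_{\phi_0}^{-1}\|<\varepsilon/3$ on $U$ (using the previous paragraph applied to the fixed element $A_0'\in\mathcal{A}^\circ$), and restrict $A$ to the ball $\|A-A_0\|<\varepsilon/(6M^2)$. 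The decomposition
\begin{equation*}
T_\phi AT_\phi^{-1}-T_{\phi_0}A_0T_{\phi_0}^{-1}=T_\phi(A-A_0')T_\phi^{-1}+\bigl(T_\phi A_0'T_\phi^{-1}-T_{\phi_0}A_0'T_{\phi_0}^{-1}\bigr)+T_{\phi_0}(A_0'-A_0)T_{\phi_0}^{-1},
\end{equation*}
combined with $\|A-A_0'\|\le\|A-A_0\|+\|A_0-A_0'\|$, then yields the required norm estimate, completing the proof.

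The main obstacle is conceptual rather than computational: the family $\phi\mapsto T_\phi$ is only strongly, not norm, continuous in $\phi$, so a naive multiplicative bound of the type $\|T_\phi AT_\phi^{-1}-T_{\phi_0}AT_{\phi_0}^{-1}\|\lesssim (\|T_\phi-T_{\phi_0}\|+\|T_\phi^{-1}-T_{\phi_0}^{-1}\|)\|A\|$ is unavailable, and this is exactly the phenomenon that will force later bundles of operators to be treated with care. The point is that this defect disappears on the pseudodifferential subalgebra $\mathcal{A}^\circ$, where conjugation acts at the level of symbols and is therefore Fr\'echet, and hence norm, continuous.
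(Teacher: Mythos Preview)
Your argument is correct in its overall architecture---establish the action on $\A^\circ$ first, then extend by density using local uniform boundedness of $\|T_\phi\|$---and the three-$\varepsilon$ extension in your final paragraph is exactly what is needed; your diagnosis of the obstacle (only strong continuity of $\phi\mapsto T_\phi$) and the role of $\A^\circ$ in circumventing it is also spot on. The paper, however, handles the key step (norm continuity of $\phi\mapsto T_\phi A T_\phi^{-1}$ for fixed $A\in\A^\circ$) by a different device, taken from \cite[Proposition~1.3]{AS}: one reduces to $\phi_0=\mathrm{id}$, uses the exponential map to write nearby $\phi$ as $\exp(V)$ for a vector field $V\in\mathcal{V}_G$ (those which, in the collar, pull back from $\partial X$), differentiates the path $t\mapsto T_{\exp(tV)}AT_{\exp(tV)}^{-1}$, and observes that the derivative is a conjugate of the commutator $[V,A]$. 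Since $[V,A]\in\A^\circ$ by the rules of the calculus, its $L^2$-operator norm is controlled by Fr\'echet seminorms of $V$ and $A$, and integration over $t\in[0,1]$ gives the estimate. This reduces the proof to four structural facts (diffeomorphism invariance of $\A^\circ$, strong continuity and local boundedness of $T_\phi$, the exponential chart on $G$, and closure of $\A^\circ$ under $[V,\cdot]$) and bypasses your Fa\`a di Bruno bookkeeping entirely. Your route should also work, but the quantitative change-of-variable estimates would have to be carried out separately for the operator-valued symbols of the singular Green part, which is more laborious than the commutator trick.
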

\begin{proof} This corresponds to  \cite[Proposition 1.3]{AS}.  
In fact, even if $X$ is closed, 
Atiyah and Singer consider a slightly different situation in that 
they close $\A^\circ$  with respect to the operator norm of the action
on all Sobolev spaces, while we only use the operator norm on $L^2$.
Their argument still applies verbatim, since they treat the action 
on each Sobolev space separately.  

Indeed, the proof of \cite[Proposition 1.3]{AS} uses only a number of formal 
properties of the algebra of pseudodifferential operators which are also 
satisfied by the Boutet de
Monvel algebra, and therefore applies in the same way to our general
  situation. To be more specific, let us list these properties:
  \begin{enumerate}
  \item the Boutet de Monvel algebra $A^\circ$ is diffeomorphism invariant,
    i.e.~in particular $T_\phi A T^{-1}_\phi\in \A^\circ$ for $A\in \A^\circ$ and
    $\phi\in G$. 
  \item Each $T_\phi$ is a bounded operator on $L^2(X)$ and the map $G\to
    \mathcal{L}(L^2(X))$ is strongly continuous. Moreover, for a sufficiently
    small open neighborhood of $1$, the image has uniformly bounded norm. The
    proof of this fact as given in \cite{AS} works for compact manifolds with
    boundary exactly the same way as for closed manifolds.
  \item Let $\mathcal{V}_G$ denote the space of vector fields on $X$ which, in the
    collar, pull back from vector fields on $\partial X$. 
    The exponential map, defined with the help of Riemannian metrics which
    respect the collar structure, gives a local diffeomorphism (of Fr\'echet
    manifolds)    between  $\mathcal{V}_G$ and $G$.
  \item If $V\in \mathcal{V}_G$ and $A\in \A^\circ$ then the commutator $[A,V]$ 
  belongs to  $ \A^\circ$ by the rules of the calculus, cf. \cite[Theorem 2.7.6]{G}. 
  \end{enumerate}
  All these properties are either well known or easy to establish.
\end{proof}

\begin{corollary}\label{corol:bundle_of_C_algebras}
  We obtain the bundle $\aleph=\bigcup_{y\in Y}\A_y$ of topological algebras
  with bundle of subalgebras $\aleph^\circ=\bigcup_{y\in Y}\A^\circ_y$,
  modelled on $(\A,\A^\circ)$ with structure group the automorphism group of
  $\A$ with its norm topology and the automorphism group of $\A^\circ$ its
  Fr\'echet topology. The local trivializations are induced by the local
  trivializations of $\pi\colon Z\to Y$, where a diffeomorphisms
  $\alpha_y\colon Z_y\to  
  X$ obtained from the trivialization map $\A_y$ to $\A$ by conjugation with
  $T_{\alpha_y}$.

  Moreover, the choice of metrics $(g_y)_{y\in Y}$ induces a continuous family
  of norms 
  on the fibers of $\aleph$ inducing the topology. With these norms the bundle
  becomes a bundle of C$^*$-algebras.
\end{corollary}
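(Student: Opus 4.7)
The plan is to apply the standard associated bundle construction and let Proposition~\ref{prop:conj_is_cont} carry the non-trivial weight. First I would fix an open cover $\{U_i\}$ of $Y$ over which $\pi\colon Z\to Y$ trivializes, producing diffeomorphisms $\alpha_{i,y}\colon Z_y\to X$ for $y\in U_i$ whose transition cocycle $\tau_{ij}\colon U_i\cap U_j\to G$ is continuous in the Fréchet topology (using the reduction of structure group to $G$ recalled before the statement; cf.~Appendix \ref{sec:structgroup}). Diffeomorphism invariance of the Boutet de Monvel calculus (property (1) of Proposition~\ref{prop:conj_is_cont}) then lets me define candidate trivializations
\begin{equation*}
\psi_i\colon\aleph|_{U_i}\longrightarrow U_i\times\A,\qquad A\longmapsto\bigl(y,\,T_{\alpha_{i,y}}\,A\,T_{\alpha_{i,y}}^{-1}\bigr),
\end{equation*}
and similarly $\psi_i^\circ$ for $\aleph^\circ$, with transition cocycles $\psi_i\psi_j^{-1}(y,A)=(y,T_{\tau_{ij}(y)}A T_{\tau_{ij}(y)}^{-1})$.

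Next I would invoke Proposition~\ref{prop:conj_is_cont} directly. Joint continuity of the action $G\times\A\to\A$ in the Fréchet topology on $G$ and norm topology on $\A$ shows that $y\mapsto T_{\tau_{ij}(y)}\,\cdot\,T_{\tau_{ij}(y)}^{-1}$ is a continuous map from $U_i\cap U_j$ into $\mathrm{Aut}(\A)$ equipped with the norm topology, and the analogous statement for $\A^\circ$ endows $\aleph^\circ$ with the Fréchet bundle structure. Standard associated-bundle gluing then produces $\aleph$ and $\aleph^\circ$ as locally trivial bundles with the stated structure groups, and by construction they agree fiberwise with $\A_y$ and $\A_y^\circ$.

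For the C$^*$-algebra bundle structure, I define the C$^*$-norm on $\A_y$ as the operator norm on $H_y=L^2(Z_y,\mu_y)$, where $(g_y)$ is the continuous family of metrics already chosen. Under $\psi_i$, the pullback $T_{\alpha_{i,y}}$ is a unitary from $L^2(Z_y,\mu_y)$ onto $L^2(X,(\alpha_{i,y})_*\mu_y)$, and the push-forward measure differs from a fixed reference measure on $X$ by a smooth positive density depending continuously on $y$; conjugation by the (square root of the) Radon--Nikodym derivative produces a continuous family of C$^*$-isomorphisms between $\A$ with its standard norm and $\A$ with the pulled-back norm. This exhibits the fiberwise operator norms as a continuous family, and since each $T_\phi$-conjugation is a $*$-isomorphism, the fiber norms coincide up to this continuously varying identification, yielding a genuine C$^*$-algebra bundle.

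The main obstacle is exactly the subtlety flagged in the remark just before the statement: the changes of trivialization for the Hilbert bundle $\mathfrak{H}$ are only strongly continuous, so it is not automatic that they induce norm-continuous automorphisms of $\mathcal{L}(H)$ — and indeed they do not, in general. The reason the construction nonetheless succeeds is that the conjugation action restricted to the Boutet de Monvel algebra \emph{is} norm continuous, which is precisely the pseudodifferential input of Proposition~\ref{prop:conj_is_cont}. Once this is granted the corollary is essentially formal, and the remaining work — identifying transition cocycles and verifying continuity of the C$^*$-norms — is bookkeeping.
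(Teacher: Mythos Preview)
Your proposal is correct and follows essentially the same route as the paper: both derive the topological-algebra bundle structure directly from Proposition~\ref{prop:conj_is_cont} via the associated-bundle construction, and both obtain the C$^*$-bundle structure by correcting the trivializations with conjugation by the square root of the Gram operator (your Radon--Nikodym density), which belongs to $\A$ and varies continuously in $y$. The paper's proof is slightly terser but contains no additional ideas beyond what you outline.
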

\begin{proof}
  The statement about the bundle of topological algebras follows immediately
  from Proposition \ref{prop:conj_is_cont}. Moreover, it is well known that
  each $\A_y$ is closed under taking adjoints in $\mathcal{L}(L^2(Z_y))$.

We now check that with this structure, 
we obtain a locally trivial bundle of
C$^*$-algebras. Fix a local trivialization with diffeomorphisms
$\alpha_y\colon Z_y\to X$. If we pull back  the inner
  products on $H_y$  to $H=L^2(X)$ with the induced maps, 
  then the corresponding Gram operator
  $G_y$, expressing this pullback inner product in terms of the original one
  on $L^2(X)$, is the multiplication with a smooth positive function $m_y$
  which   depends continuously on $y$: the
  density of $\alpha_y^*\mu_y$ with respect to a chosen measure $\mu$ 
  on $X$. Note that $G_y$ belongs
  to $\A$ and its norm, which is just the supremum,
  depends continuously on $y$. 
 Now compose the original trivialization of
  $\A_y$ with conjugation by $\sqrt{G_y}$ and the resulting trivialization
  will respect the C$^*$-algebra structures, but inherit the norm continuity
  of transition maps.  To summarize: with a canonical modification (given in
  terms of the inner products) we have obtained trivializations of our bundle
  $\aleph$ as a bundle of C$^*$-algebras, as claimed.
\end{proof}

\begin{definition}\label{def:Cstar_alg_of_sect}
 We denote by $\ac$ the set of continuous sections of the bundle $\aleph$ of
 C$^*$-algebras. With the pointwise operations and the supremum norm, this
 becomes a C$^*$-algebra. The underlying topological algebra is canonically
 associated to $\pi\colon Z\to Y$, the norm and the $*$-operation depend on
 the choice of the family of metrics $(g_y)_{y\in Y}$. 
\end{definition}

The principal 
symbol and the boundary principal symbol extend continuously to two families 
of C$^*$-algebra homomorphisms
\[
\sigma_y\colon \A_y\rightarrow C(S^*Z_y)
\ \ \mbox{and}\ \ 
\gamma_y\colon \A_y\rightarrow C(S^*\partial Z_y,\mathcal{L}(L^2(\R_{\ge0}))),
\]
where $S^*$ denotes cosphere bundle and $\mathcal{L}$ bounded operators.
Here $\gamma_y$  is well defined, 
since the structure group of the bundle $\pi\colon Z\to Y$
leaves the boundary defining function invariant, see \cite[Theorem 2.4.11]{G}. 

Let us denote by $S^*Z$ the disjoint union of all $S^*Z_y$. This can 
canonically be viewed as the total space of a fiber bundle over $Y$ with 
structure group $G$. 
One analogously defines $S^*\partial Z=\cup_{y}S^*\partial Z_y$ and 
$S^*Z^\circ=\cup S^*Z_y^\circ$.

\begin{df}
Given $A\in\ac$, let $\sigma_A$ be the function on $S^*Z$ defined by 
piecing together all the $\sigma_y$'s. Then $A\mapsto\sigma_A$ defines
a C$^*$-algebra homomorphism 
\[
\sigma\colon \ac\longrightarrow C(S^*Z).
\]
One also gets, analogously,  
\[
\gamma\colon \ac\longrightarrow C(S^*\partial Z,\mathcal{L}(L^2(\R_{\ge0}))).
\]
\end{df}

Let $\kc$ denote the subalgebra of $\ac$ consisting of the sections 
$(A_y)_{y\in Y}$ such that $A_y$ is compact for every $y\in Y$. It follows
immediately from the corresponding statement for a single manifold that
$
\ker\sigma\cap\ker\gamma=\kc.
$
It is also straightforward to generalize the description of $\ker\gamma$ 
for a single manifold \eqref{eq:ker_gamma_single_mf}:

\begin{thm} \label{ker}
The principal symbol restricted to $\ker\gamma$ induces a C$^*$-algebra
isomorphism
\begin{equation}
\label{kernel}
\ker\gamma/\kc\simeq C_0(S^*Z^\circ).
\end{equation}
Here $C_0(S^*Z^\circ)$ consists of the elements of $C(S^*Z)$ which, 
for every $y\in Y$, vanish on all points of $S^*Z_y$ with base point belonging to 
$\partial Z_y$.
\end{thm}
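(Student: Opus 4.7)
The plan is to mimic the single-manifold argument \eqref{eq:ker_gamma_single_mf} fiber-wise, exploiting the already-observed identity $\ker\sigma\cap\ker\gamma=\kc$, and then to patch via a partition of unity on $Y$. I first check that $\sigma$ maps $\ker\gamma$ into $C_0(S^*Z^\circ)$: for any $A=(A_y)\in\ker\gamma$ we have $A_y\in\ker\gamma_y$ for every $y$, and the single-manifold statement tells us that $\sigma_y(A_y)$ vanishes on the part of $S^*Z_y$ lying over $\partial Z_y$; doing this for all $y$ shows that $\sigma_A$ vanishes on the boundary part of $S^*Z$, hence $\sigma_A\in C_0(S^*Z^\circ)$. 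Since $\sigma$ and $\gamma$ are continuous C$^*$-homomorphisms, $\kc\subset\ker\sigma\cap\ker\gamma$ lies in the kernel of the induced map, so we get a well-defined C$^*$-homomorphism $\ker\gamma/\kc\to C_0(S^*Z^\circ)$.

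Injectivity is immediate from $\ker\sigma\cap\ker\gamma=\kc$: if $A\in\ker\gamma$ represents a class in the kernel of the induced map, then $\sigma(A)=0$, so $A\in\kc$.

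The real work is surjectivity. Given $f\in C_0(S^*Z^\circ)$, choose a finite cover $\{U_i\}$ of $Y$ by trivializing open sets for the bundle $\pi\colon Z\to Y$, with trivializations $\Phi_i\colon \pi^{-1}(U_i)\to U_i\times X$, and a subordinate partition of unity $\{\chi_i\}$ on $Y$. On each $U_i$, the function $f|_{\pi^{-1}(U_i)}$ pulls back via $\Phi_i$ to a continuous function $f_i\colon U_i\times S^*X^\circ\to\C$ with compact support in the $S^*X^\circ$-factor uniformly in $y$ in view of the continuity of $f$ in both variables and of $f$ vanishing at the boundary cosphere. A standard continuous quantization (for instance taking a fixed family of local symbols $p_i(y;x,\xi)$ depending continuously on $y$ in the Fr\'echet topology of classical symbols and applying the quantization map and multiplying by a smooth cut-off supported away from the boundary) produces, by Proposition~\ref{prop:conj_is_cont} and Corollary~\ref{corol:bundle_of_C_algebras}, continuous local sections $A^{(i)}$ of $\aleph|_{U_i}$ such that $\sigma(A^{(i)}_y)=f_i(y,\cdot)$ and the underlying \psd{} part of $A^{(i)}_y$ has its symbol supported away from $\partial Z_y$, so that $\gamma(A^{(i)}_y)=0$. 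Then $A:=\sum_i \chi_i A^{(i)}$, where each term is extended by zero outside $U_i$, is a continuous global section; it lies in $\ker\gamma$ by linearity, and $\sigma(A)=\sum_i \chi_i f = f$.

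The main technical point to be careful about is this last surjectivity step: we must arrange, simultaneously on each trivializing chart, continuity of the local lifts $y\mapsto A^{(i)}_y$ in the norm topology of $\aleph$ and the vanishing of $\gamma(A^{(i)}_y)$. Both are handled by quantizing continuous families of classical symbols that are compactly supported away from the boundary cosphere, which is possible because $f$ itself vanishes there; the norm continuity then follows from the continuity of the quantization map and the fact that the transition functions of $\aleph$ are norm-continuous (Corollary~\ref{corol:bundle_of_C_algebras}).
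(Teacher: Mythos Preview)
Your argument is essentially the ``straightforward generalization'' that the paper alludes to (the paper gives no detailed proof, merely stating that the single-manifold isomorphism \eqref{eq:ker_gamma_single_mf} generalizes directly), so the approaches coincide. One small slip: a general $f\in C_0(S^*Z^\circ)$ need not have support bounded away from the boundary cosphere, so your claim of ``compact support in the $S^*X^\circ$-factor'' is not literally true; but since a $*$-homomorphism of C$^*$-algebras has closed range, it suffices to hit the dense subalgebra of such compactly supported functions, and your partition-of-unity construction does exactly that.
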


Regarding each $f\in C(Z)$ as a family of multiplication operators on 
$(H_y)_{y\in Y}$, 
furnishes an embedding of $C(Z)$ in $\ac$, which we denote 
$m\colon C(Z)\to\ac$. 
Mapping a $g\in C(\partial Z)$ to 
the boundary principal symbol of $m(f)$, where $f\in C(Z)$ is such that its  
restriction to $\partial Z$ is $g$, defines the C$^*$-algebra homomorphism 
$b\colon C(\partial Z)\to\im\gamma$. 

\begin{thm}\label{biso} The homomorphisms 
$b_*\colon K_i(C(\partial Z))\to K_i(\im\gamma)$, 
$i=0,1$, induced by $b$ are isomorphisms. 
\end{thm}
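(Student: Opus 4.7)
The plan is to reduce the theorem to the single-manifold statement of \cite{MNS} by a Mayer--Vietoris argument on the compact base $Y$, combined with the K\"unneth theorem in the trivial-bundle case. As a preliminary I would check that $\im\gamma$ and $C(\partial Z)$ are both C$^*$-algebras of continuous sections of locally trivial bundles of C$^*$-algebras over $Y$, with fibers $\im\gamma_y$ and $C(\partial Z_y)$ respectively, and that $b$ is a bundle morphism which on each fiber over $y$ specializes to the classical homomorphism $b_y\colon C(\partial Z_y)\to\im\gamma_y$ of \cite{MSS,MNS}. Local triviality of the bundle underlying $\im\gamma$ follows from the diffeomorphism invariance already used in Proposition \ref{prop:conj_is_cont} and Corollary \ref{corol:bundle_of_C_algebras}, together with the fact that the structure group $G$ preserves the boundary defining function, so that $\gamma$ is $G$-equivariant and its image inherits a norm-continuous $G$-action.

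Next I would choose a finite open cover $U_1,\ldots,U_N$ of $Y$ over which $\pi\colon Z\to Y$ is trivial, and induct on $N$. The induction step uses the C$^*$-algebraic Mayer--Vietoris sequence for an open cover (applied for instance to $U_1\cup (U_2\cup\cdots\cup U_N)$), employed in parallel for $C(\partial Z)$ and $\im\gamma$ and intertwined by $b$; the Five Lemma then propagates the inductive hypothesis. The base case $Z=Y\times X$ is handled by trivialising the bundles to get identifications $C(\partial Z)\cong C(Y)\otimes C(\partial X)$ and $\im\gamma\cong C(Y)\otimes \im\gamma_X$, under which $b$ becomes $\mathrm{id}_{C(Y)}\otimes b_X$. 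Since $\im\gamma_X$ admits an explicit composition series (compact operators inside, commutative quotient), it is Type I and hence in the UCT class, as is $C(\partial X)$. Combining the single-manifold isomorphism $(b_X)_*$ of \cite{MNS} with the K\"unneth theorem in K-theory yields that $\mathrm{id}_{C(Y)}\otimes b_X$ also induces a K-theory isomorphism.

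The main obstacle I anticipate is the preliminary step itself: establishing the locally trivial bundle structure on $\im\gamma$ so that the transition functions are norm-continuous automorphisms of $\im\gamma_X$, despite the fact that the structure group of $\mathfrak H$ acts on the cosphere-fiber Hilbert spaces $L^2(\R_{\ge 0})$ only by strongly (not norm) continuous operators. This is essentially the same subtlety already resolved for $\aleph$ in Corollary \ref{corol:bundle_of_C_algebras}, and I would again handle it by conjugating with Gram-type operators to obtain norm continuity after passing to the C$^*$-closure.
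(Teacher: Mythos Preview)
Your proposal is correct and follows essentially the same approach as the paper: a Mayer--Vietoris induction over a finite trivializing cover of $Y$, with the base case handled by the K\"unneth theorem applied to $\mathrm{id}_{C_0(U)}\otimes{\tt b}$ and the single-manifold isomorphism of \cite{MNS}, and the inductive step by the Five Lemma. The one point worth noting is that the paper sidesteps your anticipated ``main obstacle'' entirely: rather than establishing a locally trivial bundle-of-C$^*$-algebras structure on $\im\gamma$, it works directly with the ideals $(\im\gamma)_U$ of sections vanishing outside $\bigcup_{y\in U}S^*\partial Z_y$ and checks by a short density argument (multiplication by compactly supported functions on $U$) that $(\im\gamma)_U=\im(\gamma|_{\ac_U})$, which is all that is needed to obtain the cartesian squares for Mayer--Vietoris.
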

\pf
Given an open set $U\subseteq Y$, let us denote by 
$\pi_U\colon Z_U=\pi^{-1}(U)\to U$ the 
restriction of $\pi$ to $U$, by $\ac_U$ the algebra of sections in $\ac$ which vanish 
outside $U$ and by $\gamma_U$ the restriction of $\gamma$ to $\ac_U$.
Moreover we let 
$$C_0(\partial Z_U)=\{f\in C(\partial Z)\colon  \text{supp}\, f\subseteq
\pi_\partial^{-1}(U)\}
$$
and write $b_U$ for the restriction of $b$ to $C_0(\partial Z_U)$.
If the bundle $\pi$
is trivial over $U$, then $\ac_U$ is isomorphic to $C_0(U,\A)$ and, with respect to
this isomorphism, $b_U$ corresponds to the tensor product of the identity on $C_0(U)$
with the corresponding map for a single manifold, also denoted by $b$ on \cite{MNS,MSS}.
It is the content of \cite[Corollary 8]{MNS} that $b$ induces a K-theory isomorphism 
onto the image of $\gamma$.
It then follows from the K\"unneth formula for C$^*$-algebras \cite{S}
that $b_U$ induces isomorphisms 
$
b_{U*}\colon K_i(C_0(\partial Z_U))\longrightarrow K_i(\im\gamma_U)
$, $i=0,1$, see Proposition \ref{kun} in Appendix \ref{sec:kunneth}.

Now let $(\im\gamma)_U$ denote the subset of $\im\gamma$ consisting 
of those functions which vanish outside 
${\displaystyle \cup_{y\in U}S^*\partial Z_y}$.
It is obvious that 
$\im \gamma_U\subseteq (\im \gamma)_U$. Since both 
$\im \gamma_U$ and $(\im \gamma)_U$ are closed in 
$C(S^*\partial Z,\mathcal{L}(L^2(\R_{\ge0})))$, to show that they are equal it 
suffices to show that the former is dense in the latter. This follows from 
the fact that multiplication by a complex continuous function with support
contained in $U$ maps $(\im \gamma)_U$ to $\im \gamma_U$. 
This simple observation implies that, for open sets $U$ and $V$,  
we have a canonical C$^*$-algebra isomorphism
\begin{equation}\label{hannover}
\im \gamma_{U\cap V}\cong
\{(f,g)\in\im \gamma_U\oplus\im \gamma_V;f=g \}.
\end{equation}

Now suppose that we have shown $b_{U*}$ to be an isomorphism
for some open $U$ and that $V$ is open and $\pi$ trivial over
  $V$, and so in particular also over $U\cap V$.  
We then consider the two --- thanks to \eqref{hannover} ---
diagrams  
\[
\begin{array}{ccc}
C_0(\partial Z_{U\cap V})&\to&C_0(\partial Z_U)\\
\downarrow&&\downarrow\\
C_0(\partial Z_V)&\to&C_0(\partial Z_{U\cup V})
\end{array}
\ \ \mbox{and}\ \ 
\begin{array}{ccc}
\im \gamma_{U\cap V}&\to&\im \gamma_U\\
\downarrow&&\downarrow\\
\im \gamma_V&\to&\im \gamma_{U\cup V}
\end{array}.
\]
Because they are cartesian, we may extract from both diagrams
  cyclic exact Mayer-Vietoris sequences  (see \cite[21.2.2]{Bl} or 
\cite[7.2.1]{MS}), and we may use the K-theory maps induced
by $b_U$, $b_V$, $b_{U\cap V}$ and $b_{U\cup V}$ to map the first cyclic sequence to the
second. By assumption and the case of trivial bundles, the
maps induced by $b_U$, $b_V$ and $b_{U\cap V}$ are isomorphisms. It then
follows from the five-lemma that also $b_{U\cup V}$ induces a K-theory
isomorphism. 

Since $Y$ has a finite cover by open sets over which $\pi$ is trivial, 
induction shows that $b$ induces K-theory isomorphisms.
\cqd

Using Theorem~\ref{ker}, we obtain the following commutative diagram of 
C$^*$-algebra homomorphisms, whose horizontal lines are exact:
\[
\def\mapup#1{\Big\uparrow\rlap{$\vcenter{\hbox{$\scriptstyle#1$}}$}}
\begin{array}{ccccccc}
0\longrightarrow&C_0(S^*Z^\circ)&\longrightarrow&\mathfrak{A}/\mathfrak{K}&{\mathop{\longrightarrow}\limits^\gamma}&\text{Im}\gamma&\longrightarrow 0
\\                &\mapup{m^\circ}&&\mapup{m}&&\mapup{b}&
\\0\longrightarrow&C_0(Z^\circ)&\longrightarrow&C(Z)&{\mathop{\longrightarrow}\limits^r}&C(\partial Z)&\longrightarrow 0
\end{array}.
\]
We have denoted by $r$ the map that pieces together all restrictions 
$r_y\colon C(Z_y)\to C(\partial Z_y)$, $y\in Y$, and by $Z^\circ$ the union
$\cup_yZ_y^\circ$. Since the isomorphism (\ref{kernel}) is induced by 
the principal symbol, and the principal symbol of an operator of multiplication by
a function is the function itself, the map $m^\circ$ in the diagram above is 
actually the map of composition with the canonical projection $S^*Z^\circ\to Z^\circ$. 
We may apply the cone-mapping functor \cite[Lemma 9]{MSS} to the above diagram 
and get (using the same arguments that prove (11) in \cite{MSS}) the following 
commutative diagram of cyclic exact sequences
\begin{equation}
\label{2ces}
\def\mapdown#1{\downarrow\rlap{$\vcenter{\hbox{$\scriptstyle#1$}}$}}
\def\mapup#1{\uparrow\rlap{$\vcenter{\hbox{$\scriptstyle#1$}}$}}
\begin{array}{ccc}
K_0(C_0(Z^\circ))          &\longrightarrow                         &K_0(C(Z))\\
\mapdown{m^\circ_*}&                                        &\mapdown{m_*}\\
K_0(C_0(S^*Z^\circ))     &\longrightarrow &K_0(\ac/\kc)\\
{\downarrow}          &                                        &{\downarrow}\\
K_1(Cm^\circ)              &{\mathop{\longrightarrow}\limits^{\cong}}&K_1(Cm)\\
\downarrow               &                                        &\downarrow\\
K_1(C_0(Z^\circ))          &\longrightarrow                         &K_1(C(Z))\\
\mapdown{m^\circ_*}&                                        &\mapdown{m^\circ_*}\\
K_1(C_0(S^*Z^\circ))     &\longrightarrow &K_1(\mathfrak{A}/\mathfrak{K})\\
 \downarrow          &                                        &\downarrow\\
K_0(Cm^\circ)              &{\mathop{\longrightarrow}\limits^{\cong}}&K_0(Cm)\\
\downarrow               &                                        &\downarrow\\
K_0(C_0(Z^\circ))          &\longrightarrow                         &K_0(C(Z))
\end{array},
\end{equation}
where $\cong$ denotes isomorphism.

Up to this point, everything goes exactly as in the case of a single manifold,
but here comes a difference: 
The homomorphism $m_0$ does not necessarily 
have a left inverse (in the case of a single manifold $X$,
such a left inverse is defined 
by composition with a section of $S^*X$), and hence the cyclic exact sequences above
do not have to split into short exact ones. 

To proceed we now introduce the subalgebra $\mathfrak A^\dagger$ 
of $\mathfrak A$ and an associated subalgebra $B$ of $C(S^*Z)$
with the properties outlined in the introduction:
For each $y\in Y$, let $B_y$ denote the subalgebra of $C(S^*Z_y)$ consisting of 
the functions which do not depend on the co-variable over the boundary, that 
is, an $f\in C(S^*Z_y)$ belongs to $B_y$ if and only if the restriction of $f$ 
to the points of $S^*Z_y$ over $\partial Z_y$ equals $g\circ p_y$, for some
$g\in C(\partial Z_y)$, where $p_y\colon S^*Z_y\to Z_y$ is the canonical projection. 
We then define $\A_y^\dagger$ as the C$^*$-subalgebra of $\A_y$ generated by  
$\{P_+;\ P$ is a pseudodifferential operator with the 
transmission property and $\sigma_y(P_+)\in B_y\}$. 

\begin{df}Let $B$ denote the subalgebra of $C(S^*Z)$ consisting of the
functions whose restriction to each $S^*Z_y$ belongs to $B_y$.
We let then $\ac^\dagger$ be the C$^*$-subalgebra of $\ac$ consisting of the 
sections $(A_y)_{y\in Y}$ such that $A_y\in\A_y^\dagger$ for every $y\in Y$.
\end{df}

\begin{pro}\label{hoc} 
The C$^*$-algebra
$\ac^\dagger/\kc$ is commutative, and the map
\[
\ac^\dagger/\kc\ni[A]{\mathop{\longmapsto}\limits^{\bar\sigma}}\sigma(A)\in B
\]
is a C$^*$-algebra isomorphism. 
\end{pro}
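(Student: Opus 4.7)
The plan is to show that $\bar\sigma$ is a well-defined injective C$^*$-homomorphism with image equal to $B$; commutativity of $\ac^\dagger/\kc$ then follows immediately from commutativity of $B$. Well-definedness is clear: $\sigma$ vanishes on $\kc$, and $\sigma(P_+)\in B$ for each generator $P_+$ of $\ac^\dagger$ by the very definition of $\ac^\dagger$, so $\sigma(\ac^\dagger)\subseteq B$ since $\sigma$ is a C$^*$-homomorphism.

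The crux of injectivity is the following fiberwise identity, which I would verify first on generators and extend by C$^*$-operations: for every $A\in\ac^\dagger$ and every $(y,x',\xi')\in S^*\partial Z$,
\[
\gamma(A)(y,x',\xi') \;=\; g(y,x')\cdot I_{L^2(\R_{\ge 0})},
\]
where $g$ is the restriction of $\sigma(A)$ to $\partial Z$ (well-defined because $\sigma(A)\in B$ is constant in the co-variable over $\partial Z$). Indeed, for a generator $P_+$ with $\sigma(P_+)=p^0\in B_y$, the symbol at the boundary satisfies $p^0(x',0,\xi',\xi_n)=g(x')$, so the Wiener-Hopf operator $p^0(x',0,\xi',D_n)_+$ collapses to scalar multiplication by $g(x')$; since the scalar multiples of $I_{L^2(\R_{\ge 0})}$ form a closed subalgebra of $\mathcal{L}(L^2(\R_{\ge 0}))$, this property persists under sums, products, adjoints, and norm limits. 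Consequently, if $\sigma(A)=0$ then $\gamma(A)=0$, so $A\in\ker\sigma\cap\ker\gamma=\kc$; this gives injectivity of $\bar\sigma$, and commutativity of $\ac^\dagger/\kc$ follows from its embedding into the commutative algebra $B$.

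For surjectivity, it suffices to show that $\sigma(\ac^\dagger)$ is dense in $B$, as it is already a C$^*$-subalgebra. I would argue by a partition of unity on $Y$: choose a finite open cover $\{U_\alpha\}$ of $Y$ trivialising $\pi$, with subordinate partition of unity $\{\chi_\alpha\}$. For a smooth $f\in B$ (such functions being dense), each $\chi_\alpha f$ is supported over a compact subset of $U_\alpha$; in a trivialisation $\pi^{-1}(U_\alpha)\cong U_\alpha\times X$ it becomes a continuous family of smooth symbols in $B_X$. Quantising each such symbol fiberwise by a fixed procedure (keeping only the principal homogeneous term, so all lower-order symbols vanish) yields a continuous section $A_\alpha$ of $\aleph^\dagger$ over $U_\alpha$ with $\sigma(A_\alpha)=\chi_\alpha f$; extending by zero and summing, $A=\sum_\alpha A_\alpha\in\ac^\dagger$ satisfies $\sigma(A)=f$. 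The main technical obstacle is ensuring that the quantization lands in $\A_y^\dagger$ and depends norm-continuously on $y$: the transmission condition is automatic for the principal symbol on $B$ (its constancy in the co-variable at the boundary makes all mixed derivatives at $(x',0,0,\pm 1)$ match trivially), while the vanishing of lower-order terms makes transmission for them vacuous; norm continuity in $y$ is then the standard continuity of a fixed quantization procedure in its symbol.
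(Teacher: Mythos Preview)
Your proof is correct and follows essentially the same route as the paper. The injectivity argument is identical in spirit: the paper observes that for generators $P$ of $\ac^\dagger$ the boundary symbol $\gamma(P)$ acts as multiplication by $\sigma(P)|_{\partial Z}$, so $\sigma\oplus\gamma$ and $\sigma$ have the same kernel on $\ac^\dagger$, namely $\kc$; you make the same observation, spelling out explicitly that the scalar multiples of $I$ form a closed $*$-subalgebra so the property passes to the C$^*$-closure. For surjectivity the paper is terser, simply noting that $\sigma(\ac^\dagger)$ is dense in $B$ ``by the very definition'' and then invoking that C$^*$-homomorphisms have closed range; your partition-of-unity quantization argument is a legitimate (if more laborious) way to supply that density, though your remark on the transmission condition would be cleaner if you first approximated by symbols that are independent of the co-variable in a full collar $\{x_n<\varepsilon\}$ rather than just at $x_n=0$, so that all $x_n$-derivatives at the boundary are automatically $\xi$-independent.
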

\begin{proof}
    Let $P=(P_y)$ be a family of \emph{pseudodifferential} operators with
    symbol independent of the co-variable over the boundary, 
     i.e.~a generator
    of $\ac^\dagger$. According to 
\eqref{sec:def_of_boundary_symb}, $\gamma(P)$ can 
be considered as a
    function on $\partial Z$, acting for $z\in\partial Z$ on $L^2(\reals_{\ge
      0})$ by multiplication with $\gamma(P)(z)$. Moreover, for $z\in\partial
    Z$ we have $\gamma(z)=\sigma(z)$ independent of the
    co-variable by assumption. 
    It follows that the composed algebra homomorphism
    \begin{equation*}
      \sigma\colon \ac^\dagger\xrightarrow{\sigma\oplus\gamma} 
C(S^*Z)\oplus
      C(S^*\partial Z, \mathcal{L}(L^2(\R_{\ge0}))) \xrightarrow{pr} C(S^*Z)
    \end{equation*}
    has the same kernel as $\sigma\oplus\gamma$, namely $\kc$ and so the map
    we consider is injective and in particular $\ac^\dagger/\kc$ is
    commutative. By the very definition of $\ac^\dagger$,
    $\sigma\colon\ac^\dagger\to B$ has dense image, as a morphism of
    C$^*$-algebras it is therefore also surjective.  
\end{proof}

This allows us to describe the K-theory of $\ac/\kc$: 
\begin{thm}\label{Kth}
The composition
\[
K_i(\ac/\kc){\mathop{\longrightarrow}\limits^{\iota_*^{-1}}}K_i(\ac^\dagger/\kc)
{\mathop{\longrightarrow}\limits^{\bar\sigma_*}}K_i(B)
\]
is an isomorphism, $i=0,1$.
\end{thm}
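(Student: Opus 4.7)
The plan is to reduce the statement to showing that the inclusion $\iota\colon\ac^\dagger\hookrightarrow\ac$ induces an isomorphism $\iota_*\colon K_i(\ac^\dagger/\kc)\to K_i(\ac/\kc)$. Since $\bar\sigma_*$ is an isomorphism by Proposition \ref{hoc}, this will finish the proof.

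First, I would analyze $\gamma$ restricted to $\ac^\dagger$. If $P_+$ is a generator of $\ac^\dagger$, its symbol at the boundary is independent of the co-variable and equals some $g(x^\prime)\in C(\partial Z)$, so $\gamma(P_+)$ is fiberwise multiplication by this scalar. It follows that $\gamma(\ac^\dagger)$ coincides with the image of the map $b\colon C(\partial Z)\to\im\gamma$ from Theorem \ref{biso}. Since $b(g)$ recovers $g$ pointwise, $b$ is injective and we obtain an identification $\gamma(\ac^\dagger)\cong C(\partial Z)$. Using the isomorphism $\bar\sigma\colon\ac^\dagger/\kc\xrightarrow{\sim} B$ of Proposition \ref{hoc} together with the evident exact sequence $0\to C_0(S^*Z^\circ)\to B\to C(\partial Z)\to 0$ (coming from the fact that elements of $B$ over $\partial Z$ are pulled back from $C(\partial Z)$), one obtains a short exact sequence for $\ac^\dagger/\kc$.

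Next I would assemble the commutative diagram of short exact sequences
\[
\begin{array}{ccccccc}
0\to & C_0(S^*Z^\circ) & \to & \ac^\dagger/\kc & \to & C(\partial Z) & \to 0\\
     & \| & & \downarrow\iota & & \downarrow b & \\
0\to & C_0(S^*Z^\circ) & \to & \ac/\kc & {\mathop{\to}\limits^{\gamma}} & \im\gamma & \to 0
\end{array}
\]
the bottom row being the one provided by Theorem \ref{ker}. Commutativity of the right-hand square is automatic from the construction of the top row via $\gamma$ and $b^{-1}$, and that of the left-hand square is trivial.

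Passing to the induced cyclic exact sequences in K-theory, the left vertical map is the identity and the right one is $b_*$, which is an isomorphism by Theorem \ref{biso}. The five lemma then forces $\iota_*$ to be an isomorphism, and composing with $\bar\sigma_*$ yields the statement. The main bookkeeping step, which I expect to be the only thing requiring care, is checking that $\gamma(\ac^\dagger)$ really equals $b(C(\partial Z))$ and that the identification makes the right-hand square commute; both reduce to the definition of $\ac^\dagger$ and the explicit formula \eqref{sec:def_of_boundary_symb} for the boundary symbol.
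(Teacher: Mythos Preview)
Your argument is correct and in fact more direct than the paper's. Both proofs establish that $\iota_*$ is an isomorphism via a five-lemma argument, but they feed different exact sequences into it.

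You compare the six-term sequences arising from
\[
0\to C_0(S^*Z^\circ)\to \ac^\dagger/\kc \to C(\partial Z)\to 0
\quad\text{and}\quad
0\to C_0(S^*Z^\circ)\to \ac/\kc \xrightarrow{\gamma} \im\gamma\to 0,
\]
linked by $(\mathrm{id},\iota,b)$, and invoke Theorem~\ref{biso} for the right-hand vertical map. The paper instead manufactures its exact sequence for $K_*(\ac/\kc)$ indirectly: it passes to mapping cones to obtain the double ladder~(\ref{2ces}), applies the diagram chase of Proposition~\ref{ditsche} to extract the cyclic sequence~(\ref{cyclic1}), and then compares that with the Mayer--Vietoris sequence~(\ref{cyclic2}) coming from the pushout square~(\ref{cd3}) describing $B$.

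Your route bypasses the mapping-cone machinery and Proposition~\ref{ditsche} entirely; the price is that you must verify $\gamma(\ac^\dagger)=b(C(\partial Z))$ and the commutativity of the squares, which (as you note) follows from \eqref{sec:def_of_boundary_symb}. The paper's longer route is inherited from the single-manifold argument in \cite{MSS}, and its intermediate sequences~(\ref{cyclic1}),~(\ref{cyclic2}) display $K_*(\ac/\kc)$ and $K_*(B)$ in terms of the same auxiliary groups $K_*(C_0(Z^\circ))$, $K_*(C(Z))$, $K_*(C_0(S^*Z^\circ))$, which is of some independent interest. One small point worth making explicit in your write-up: the left vertical map really is the identity because both identifications of the kernel with $C_0(S^*Z^\circ)$---via $\bar\sigma$ in the top row and via Theorem~\ref{ker} in the bottom row---are induced by the principal symbol $\sigma$.
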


The proof makes use of the following proposition, which is easily established
by a diagram chase, compare \cite[Exercise 38 of Section 2.2]{Hatcher}: 

\begin{pro}\label{ditsche} Let there be given a commutative diagram of 
abelian groups with exact rows, 
\[
\def\mapdown#1{\downarrow\rlap{$\vcenter{\hbox{$\scriptstyle#1$}}$}}
\def\mapup#1{\uparrow\rlap{$\vcenter{\hbox{$\scriptstyle#1$}}$}}
\begin{array}{ccccccccccc}
\cdots&\rightarrow
&A_i^\prime&{\mathop{\longrightarrow}\limits^{f_i^\prime}}
&B_i^\prime&{\mathop{\longrightarrow}\limits^{g_i^\prime}}
&C_i^\prime&{\mathop{\longrightarrow}\limits^{h_i^\prime}}
&A_{i+1}^\prime
&\rightarrow&\cdots
\\
&
&\mapup{a_i}&&\mapup{b_i}&&\mapup{c_i}&&\mapup{a_{i+1}}&
&
\\
\cdots&\rightarrow
&A_i&{\mathop{\longrightarrow}\limits^{f_i}}
&B_i&{\mathop{\longrightarrow}\limits^{g_i}}
&C_i&{\mathop{\longrightarrow}\limits^{h_i}}
&A_{i+1}
&\rightarrow&\cdots
\end{array},
\]
where each $c_i$ is an isomorphism. Then the sequence
\[
\cdots\longrightarrow A_i{\mathop{\longrightarrow}\limits^{(a_i,-f_i)}}
A_i^\prime\oplus B_i{\mathop{\longrightarrow}\limits^{\langle f_i^\prime,b_i\rangle}}
B_i^\prime{\mathop{\longrightarrow}\limits^{h_ic_i^{-1}g_i^\prime}}
A_{i+1}\longrightarrow\cdots
\]
is exact, where $\langle f_i^\prime,b_i\rangle$ is the map defined by
$\langle f_i^\prime,b_i\rangle(\alpha,\beta)=f_i^\prime(\alpha)+b_i(\beta)$.
\end{pro}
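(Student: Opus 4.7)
The statement is a purely algebraic exactness lemma, so I plan to prove it by a direct diagram chase in the style of Hatcher's treatment of Mayer--Vietoris. My plan is to check exactness of the long sequence at the three distinct types of positions, namely at $A_i'\oplus B_i$, at $B_i'$, and at $A_{i+1}$; the full cyclic sequence then follows by letting $i$ vary. In each case I verify that the composition of two successive maps is zero (using commutativity of the original ladder together with exactness of its two rows) and then that the kernel of the outgoing map is contained in the image of the incoming one.

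Exactness at $B_i'$ is immediate. The composition vanishes because $g_i'f_i'=0$ and $g_i'b_i=c_ig_i$ combine with $h_ig_i=0$. Conversely, if $\beta'\in\ker(h_ic_i^{-1}g_i')$, then $c_i^{-1}g_i'(\beta')\in\ker h_i=\im g_i$, so it equals $g_i(\beta)$ for some $\beta\in B_i$; then $g_i'(\beta'-b_i(\beta))=0$, whence $\beta'-b_i(\beta)=f_i'(\alpha)$ and $\beta'=\langle f_i',b_i\rangle(\alpha,\beta)$. Exactness at $A_{i+1}$ is equally routine: the composition vanishes by $h_i'g_i'=0$ together with the commutativity relations $a_{i+1}h_i=h_i'c_i$ and $f_{i+1}h_i=0$; conversely, if $a_{i+1}(x)=0=f_{i+1}(x)$, I write $x=h_i(\gamma)$ using exactness of the bottom row, observe that $h_i'c_i(\gamma)=a_{i+1}(x)=0$, so $c_i(\gamma)=g_i'(\beta')$ by exactness of the top row, and conclude $h_ic_i^{-1}g_i'(\beta')=x$.

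The delicate step, where the hypothesis that each $c_i$ is an isomorphism is genuinely used, is exactness at $A_i'\oplus B_i$. The composition is zero by commutativity of the left square. For the converse, I take $(\alpha,\beta)$ with $f_i'(\alpha)+b_i(\beta)=0$. Applying $g_i'$ gives $c_ig_i(\beta)=g_i'b_i(\beta)=-g_i'f_i'(\alpha)=0$; injectivity of $c_i$ forces $g_i(\beta)=0$, so $\beta=f_i(y)$ for some $y\in A_i$. Then $f_i'(\alpha+a_i(y))=f_i'(\alpha)+b_if_i(y)=f_i'(\alpha)+b_i(\beta)=0$, so $\alpha+a_i(y)=h_{i-1}'(\gamma')$ for some $\gamma'\in C_{i-1}'$; surjectivity of $c_{i-1}$ supplies $\gamma\in C_{i-1}$ with $c_{i-1}(\gamma)=\gamma'$, and commutativity gives $h_{i-1}'(\gamma')=a_ih_{i-1}(\gamma)$. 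Setting $x:=h_{i-1}(\gamma)-y$ then yields $a_i(x)=\alpha$ and $-f_i(x)=f_i(y)=\beta$, so $(a_i,-f_i)(x)=(\alpha,\beta)$.

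I expect the only real obstacle to be bookkeeping of indices and signs in this middle case, especially the need to invoke both $c_i$ (for injectivity) and $c_{i-1}$ (for surjectivity) in the same argument; everything else is a mechanical application of standard diagram chasing, and no ingredient beyond invertibility of the $c_i$ is used.
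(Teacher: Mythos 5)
Your proof is correct and complete, and it is precisely the diagram chase the paper alludes to (by reference to Hatcher, Exercise~38 of Section~2.2) without writing it out; in particular you correctly isolate where injectivity of $c_i$ and surjectivity of $c_{i-1}$ are each needed.
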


We are now ready to prove Theorem \ref{Kth}.
Applying Proposition \ref{ditsche} to the diagram (\ref{2ces}), we get the exact sequence
\begin{equation}
\label{cyclic1}
\begin{array}{ccccc}
K_0(C_0(Z^\circ))&\rightarrow &K_0(C(Z))\oplus K_0(C_0(S^*Z^\circ))&\rightarrow& 
K_0(\ac/\kc)
\\\uparrow& & & &\downarrow
\\
K_1(\ac/\kc)&\leftarrow& K_1(C(Z))\oplus K_1(C_0(S^*Z^\circ))&\leftarrow&  
K_1(C_0(Z^\circ))
\end{array}.
\end{equation}

We next consider the following diagram of commutative C$^*$-algebras
\begin{equation}\label{cd3}
\def\mapdown#1{\downarrow\rlap{$\vcenter{\hbox{$\scriptstyle#1$}}$}}
\def\mapup#1{\uparrow\rlap{$\vcenter{\hbox{$\scriptstyle#1$}}$}}
\begin{array}{ccc}
C_0(Z^\circ)&{\mathop{\longrightarrow}\limits^{m^\circ}}&C_0(S^*Z^\circ)\\
\downarrow&&\mapdown{p_2}\\
C(Z)&{\mathop{\longrightarrow}\limits^{p_1}}&B
\end{array}.
\end{equation}
As $C_0(Z^\circ)$ is canonically isomorphic to
\[
\{(f,g)\in C(Z)\oplus C_0(S^*Z^\circ);\ p_1(f)=p_2(g)\},
\]
the Mayer-Vietoris exact sequence  associated to (\ref{cd3}) is the 
exact sequence
\begin{equation}
\label{cyclic2}
\begin{array}{ccccc}
K_0(C_0(Z^\circ))&\rightarrow &K_0(C(Z))\oplus K_0(C_0(S^*Z^\circ))&\rightarrow& 
K_0(B)
\\\uparrow& & & &\downarrow
\\
K_1(B)&\leftarrow& K_1(C(Z))\oplus K_1(C_0(S^*Z^\circ))&\leftarrow&  
K_1(C_0(Z^\circ))
\end{array}.
\end{equation}
The map $\iota\colon B\cong \ac^\dagger/\kc\hookrightarrow \ac/\kc $ and the identity on 
the other K-theory groups furnish morphisms from the 
cyclic sequence (\ref{cyclic2}) to  the cyclic sequence (\ref{cyclic1}).
The five  lemma then shows that the induced maps in K-theory are
isomorphisms.  Together with Proposition \ref{hoc} we obtain the assertion. 
\cqd

%

\section{The Boutet de Monvel family index theorem}\label{index}

The index of a continuous function with values in Fredholm operators was defined by 
J\"anich \cite{J} and Atiyah \cite{A}. 
Using the following Proposition \ref{perturb},
  their definition can be extended to sections of our $\aleph$.

\begin{pro}\label{perturb} Let $\hc$ and $\ac$ be as above, $k\in \naturals$  and let 
$(A_y)_{y\in Y}\in M_k(\ac)$ be such that, for each $y$, 
$A_y$ is a Fredholm operator, where we interpret $M_k(\ac)$ as the sections of
the bundle with fiber $M_k(\A_y)$. Then there are continuous sections
$s_1,\cdots,s_q$ of $\mathfrak{H}^k$ such that the maps
\[
\begin{array}{rccl}
\tilde A_y\colon &H_y^k\oplus\C^q&\longrightarrow&H_y^k\oplus\C^q\\
              &(v,\lambda)&\longmapsto&(A_yv+\sum_{j=1}^{q}\lambda_js_j(y),0)
\end{array}
\]
have image equal to $H_y^k\oplus 0$ for all $y\in Y$ and 
$(\ker\tilde A_y)_{y\in Y}$ is a (finite dimensional) vector bundle 
over $Y$. 
\end{pro}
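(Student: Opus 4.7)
The plan is to follow the classical J\"anich--Atiyah stabilization argument, adapted to our bundle setting. The strategy is: choose the $s_j$'s locally, using that each $A_{y_0}$ has finite-dimensional cokernel, and then patch via a partition of unity on $Y$. Once $\tilde A_y$ is surjective for every $y$, the kernel bundle property follows from the standard construction for continuous families of surjective Fredholm operators.

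First, I fix $y_0\in Y$. Since $A_{y_0}$ is Fredholm, I pick vectors $v_1,\ldots,v_m\in H_{y_0}^k$ whose images span a complement of $\im A_{y_0}$ in $H_{y_0}^k$. Using a local trivialization of $\mathfrak{H}^k$ around $y_0$, I extend the $v_j$ to continuous sections $\tilde s_1^{y_0},\ldots,\tilde s_m^{y_0}$ of $\mathfrak{H}^k$ on some open neighborhood $V_{y_0}$ of $y_0$. Within a simultaneous local trivialization of $\aleph$ and $\mathfrak{H}$, the operator $A_y$ is norm-continuous (since $A\in M_k(\ac)$ and $\aleph$ is a bundle of C$^*$-algebras in the norm topology) and the $\tilde s_j^{y_0}(y)$ are norm-continuous as vectors, so the operator
\[
\tilde A_y^{y_0}\colon H^k\oplus\C^m\ni(v,\lambda)\longmapsto A_yv+\sum_{j=1}^m\lambda_j\tilde s_j^{y_0}(y)\in H^k
\]
depends norm-continuously on $y$. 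It is surjective at $y=y_0$, and surjectivity of bounded Fredholm operators is an open condition in the operator norm, so there is an open $U_{y_0}\subseteq V_{y_0}$ containing $y_0$ on which $\tilde A_y^{y_0}$ remains surjective.

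By compactness of $Y$, I extract a finite subcover $U_{y_1},\ldots,U_{y_N}$ and a subordinate partition of unity $\{\chi_i\}$. The collection of all products $\chi_i\cdot\tilde s_j^{y_i}$, enumerated as $s_1,\ldots,s_q$, gives continuous global sections of $\mathfrak{H}^k$. For any $y\in Y$ some $\chi_i(y)>0$; then $y\in U_{y_i}$, and the vectors $\chi_i(y)\tilde s_j^{y_i}(y)$ span the same complement of $\im A_y$ as the $\tilde s_j^{y_i}(y)$ do, so $\tilde A_y$ is surjective. The kernels then assemble into a finite-dimensional vector bundle by the standard argument: norm continuity of $\tilde A_y$ plus surjectivity yields a continuous local right inverse $R_y$ around any $y_0$ (for instance $R_y=\tilde A_y^*(\tilde A_y\tilde A_y^*)^{-1}$), and for any basis $e_1,\ldots,e_d$ of $\ker\tilde A_{y_0}$ the vectors $e_i-R_y\tilde A_ye_i$ form a continuous local frame of $\ker\tilde A_y$.

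The only substantive difference from the constant Hilbert space case is the need to juggle the norm topology on $\aleph$ against the merely strong operator topology governing transitions of $\mathfrak{H}$. This turns out not to be a real obstacle: all the arguments above take place inside a single local trivialization, where both $A$ and each $s_j$ enjoy genuine norm continuity, so the classical openness-of-surjectivity and right-inverse constructions apply verbatim.
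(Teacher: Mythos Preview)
Your proposal is correct and follows exactly the approach the paper has in mind: the paper's own proof consists solely of the sentence ``Similar to \cite[Proposition (2.2)]{AS} and to \cite[Proposition A5]{A},'' and what you have written is precisely the J\"anich--Atiyah stabilization argument those references contain, adapted to the bundle setting. Your final paragraph correctly identifies and disposes of the only place where the present situation differs from the classical one, namely the strong-versus-norm topology discrepancy between $\mathfrak{H}$ and $\aleph$; since the openness-of-surjectivity and right-inverse arguments are carried out inside a single trivialization, this discrepancy is indeed harmless.
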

\pf
Similar to \cite[Proposition (2.2)]{AS} and to \cite[Proposition A5]{A}.
\cqd

\begin{df}\label{ani} 
Given $A=(A_y)_{y\in Y}\in\ac$ as in Proposition~\ref{perturb}, we
denote by $\ker\tilde A$ the bundle $(\ker\tilde A_y)_{y\in Y}$ and
define
\[
\ind_a(A)=[\ker\tilde A]-[Y\times\C^q]\in K(Y).
\]
This is independent of the choices of $q$ and of $s_1,\cdots,s_q$ and 
we call it {\em the analytical index of} $A$. 
\end{df}

If $A=(A_y)_{y\in Y}\in M_k(\ac)$ 
is a section such that each $A_y$ is a Fredholm operator on $H_y^k$ then 
the projection to $M_k(\ac/\kc)$ 
is invertible and hence defines an element of $K_1(\ac/\kc)$. Since $\ind_a(A)$
is invariant under stabilization, homotopies and perturbations 
by  
compact operator valued sections, we get a homomorphism 
\begin{equation}
\label{topind}
\ind_a\colon  K_1(\ac/\kc)\longrightarrow K(Y).
\end{equation}

$\ $

Next we define the \emph{topological} index, also 
as a homomorphism 
\[
\ind_t\colon  K_1(\ac/\kc)\longrightarrow K(Y).
\]

Let $T^*Z$ denote the union 
of all $T^*Z_y$, and $B^*Z$ the union of all $B^*Z_y$, equipped with their 
canonical topologies, where $B^*Z_y$ denotes the bundle of closed unit 
balls of $T^*Z_y$. One may regard $B^*Z$ as a compactification of $T^*Z$ and 
identify the ``points at infinity'' with $S^*Z$.

Let $\sim$ denote the equivalence relation that identifies, for each 
$y\in Y$, all points of each ball of $B^*Z_y$ which lies over a point of 
$\partial Z_y$. The C$^*$-algebra $B$ of Theorem~\ref{Kth} is isomorphic 
to the algebra of continuous functions on the quotient space 
$S^*Z/\asim$. Let 
$\beta\colon K_1(C(S^*Z/\asim))\to K_0(C_0(T^*Z^\circ))$ denote 
the index map associated to the short exact sequence
\[
0\longrightarrow C_0(T^*Z^\circ)\longrightarrow C(B^*Z/\asim)
\longrightarrow C(S^*Z/\asim)\longrightarrow 0,
\]
where $T^*Z^\circ$ is the union over $y\in Y$ of all points of $T^*Z_y$
which lie over interior points of $Z_y$ and the map from  
$C(B^*Z/\asim )$ to $C(S^*Z/\asim )$ is induced by restriction.

Let $2Z$ denote the union $\cup_y2Z_y$, where each $2Z_y$ is the
double of $Z_y$, and $\pi_d\colon 2Z\to Y$ the canonical projection. 
This can be given the structure of a $\diff(2X)$-bundle, with trivializations 
obtained by ``doubling'' (as explained at the beginning of Section \ref{sec1})
the trivializations 
of the bundle $\pi\colon Z\to Y$. Each fiber $2Z_y$ is then equipped with the 
smooth structure induced by the trivializations of
$\pi_d\colon 2Z\to Y$ and 
we can form the bundles  $T^*2Z$ and $S^*2Z$ as 
\label{coesferas} the unions, respectively, of all 
cotangent bundles $T^*(2Z_y)$ and of all cosphere bundles $S^*(2Z_y)$, $y\in Y$. 
We denote by $\mbox{{\sc as}-ind}_t\colon K_0(C_0(T^*2Z))\to K(Y)$ the composition
of Atiyah and Singer's \cite{AS}\ topological families-index for 
the bundle of closed manifolds $2Z$  with the canonical 
isomorphism $K(T^*2Z)\simeq K_0(C_0(T^*2Z))$.
Theorem~\ref{Kth} allows us to define the topological index:
\begin{df}\label{indt} 
The topological index $\ind_t$ is the following composition of maps
\begin{equation*}
\label{anaind}
\def\mapdown#1{\downarrow\rlap{$\vcenter{\hbox{$\scriptstyle#1$}}$}}
\def\mapup#1{\uparrow\rlap{$\vcenter{\hbox{$\scriptstyle#1$}}$}}
\begin{array}{rc}
\ind_t\colon K_1(\ac/\kc)
{\mathop{\longrightarrow}\limits^{\bar\sigma_*\circ\iota_{*}^{-1}}}
K_1(C(S^*Z/\asim )){\mathop{\longrightarrow}\limits^{\beta}} 
K_0(C_0(T^*Z^\circ)){\mathop{\longrightarrow}\limits^{e_*}}
&K_0(C_0(T^*2Z))\\ &
\mapdown{\mbox{{\sc as}}-\ind_t}\\ &K(Y),
\end{array}
\end{equation*}
where $e\colon C_0(T^*Z^\circ)\to C_0(T^*2Z)$ denotes the map
which extends by zero.
\end{df}

If $A=(A_y)_{y\in Y}\in\ac$ is a family of Fredholm operators
we denote by $\ind_t(A)$ the topological index evaluated
at the element of $K_1(\ac/\kc)$ that $A$ defines.

\begin{thm}\label{indthm} 
Let $A=(A_y)_{y\in Y}\in\ac$ be a continuous 
family of Fredholm operators in the closure of the Boutet de Monvel algebra for 
each $y$. Then{\em 
\begin{equation}
\label{teoind}
\ind_a(A)\ =\ \ind_t(A).
\end{equation} }
\end{thm}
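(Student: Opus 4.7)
The plan is to reduce Theorem \ref{indthm} to the classical Atiyah--Singer families index theorem \cite{AS} on the bundle of closed manifolds $\pi_d\colon 2Z\to Y$. Both $\ind_a$ and $\ind_t$ are homomorphisms $K_1(\ac/\kc)\to K(Y)$, so it suffices to prove the equality on a set of generators. The central tool is Theorem \ref{Kth}, which says that the inclusion $\iota\colon\ac^\dagger/\kc\hookrightarrow\ac/\kc$ induces an isomorphism on $K$-theory. Hence every class in $K_1(\ac/\kc)$ can be represented by a Fredholm family $A\in M_k(\ac^\dagger)$, i.e., a family of operators of the form $(P_y)_+$ whose full pseudodifferential symbol is, at the boundary, independent of the normal co-variable $\xi_n$ (and which has no singular Green, trace or potential part).

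The crucial construction is a \emph{doubling} of such a family. Because the symbol of $(P_y)_+$ at $\partial Z_y$ does not depend on $\xi_n$, the pseudodifferential operator $P_y$ extends naturally across $\partial Z_y$ to a pseudodifferential operator $2A_y$ on the double $2Z_y$, and the fiberwise norm-continuity inherited from $\ac^\dagger$ together with the compatibility of the trivializations of $Z\to Y$ with those of $2Z\to Y$ (as described in Section \ref{index}) makes $2A=(2A_y)_{y\in Y}$ a continuous Fredholm family in the C$^*$-closure of the pseudodifferential algebra over the bundle $2Z$. This puts us in the setting of \cite{AS}, producing classical analytic and topological indices $\ind_a^{\mathrm{AS}}(2A)$ and $\ind_t^{\mathrm{AS}}(2A)$ in $K(Y)$ that coincide by the Atiyah--Singer families index theorem. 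It therefore remains to show
\begin{equation*}
\ind_a(A)=\ind_a^{\mathrm{AS}}(2A)\qquad\text{and}\qquad\ind_t(A)=\ind_t^{\mathrm{AS}}(2A).
\end{equation*}

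The topological equality is essentially built into Definition \ref{indt}. By construction, $\bar\sigma_*\circ\iota_*^{-1}$ sends $[A]$ to the class of its principal symbol in $K_1(B)=K_1(C(S^*Z/\asim))$; the index map $\beta$ lifts this class to $K_0(C_0(T^*Z^\circ))$; and the extension-by-zero $e_*$ transforms it into the $K$-theory class of the principal symbol of $2A$ on $T^*2Z$, precisely because at the boundary the symbol of $2A$ matches the identification made by $\asim$. Applying $\mathrm{AS}\text{-}\ind_t$ then gives, by definition, $\ind_t^{\mathrm{AS}}(2A)$, which equals $\ind_t(A)$.

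The analytic equality is the main obstacle. For a single manifold it is the content of the observation that the Fredholm properties of $(P_y)_+$ and of its boundary-independent extension $P_y$ on the double are linked via a K-theoretic identification of kernels and cokernels up to stably isomorphic finite-rank bundles. One promotes this to families using Proposition \ref{perturb}: a compact perturbation realises both $\ind_a(A)$ and $\ind_a^{\mathrm{AS}}(2A)$ as differences of honest finite-dimensional vector bundles obtained from the same asymptotic data on the boundary, and homotopy invariance in each variable shows the two bundle classes agree in $K(Y)$. Alternatively, one can invoke Mayer--Vietoris together with the splitting \eqref{cyclic2} of $K_1(B)$ into classes coming from $K_1(C(Z))$ and $K_1(C_0(S^*Z^\circ))$, verifying the identity $\ind_a(A)=\ind_a^{\mathrm{AS}}(2A)$ separately on each summand: the first summand is represented by unitary multiplication operators, whose doubling is tautologically identified, while the second consists of classes supported in the interior, where the doubling procedure is an isomorphism at the level of operators. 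Combining the two equalities with the Atiyah--Singer theorem yields
\begin{equation*}
\ind_a(A)=\ind_a^{\mathrm{AS}}(2A)=\ind_t^{\mathrm{AS}}(2A)=\ind_t(A),
\end{equation*}
as desired.
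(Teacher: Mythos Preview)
Your overall strategy---represent classes in $K_1(\ac/\kc)$ by families in $M_k(\ac^\dagger)$ via Theorem \ref{Kth}, then double to $2Z$ and invoke Atiyah--Singer---matches the paper. However, the central step, the construction of the doubled family, has a genuine gap.

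You write that ``the pseudodifferential operator $P_y$ extends naturally across $\partial Z_y$ to a pseudodifferential operator $2A_y$ on the double $2Z_y$''. This is not well-defined, and no choice of extension acting between \emph{trivial} bundles on $2Z_y$ will do what you need. For $e_*(\beta([\sigma_A]_1))$ to equal the symbol class of $2A$, the symbol of $2A$ must be an isomorphism extending over all of $B^*Z^-$ (this is exactly what ``extension by zero'' encodes). But over $\partial Z$ the symbol is $\sigma_A|_{\partial Z}$, generally not homotopic to the identity as a map into $GL_k(\C)$, so no such extension between trivial bundles exists. The paper resolves this by a clutching construction: it builds a nontrivial bundle $E$ over $2Z$ by gluing $Z^+\times\C^k$ to $Z^-\times\C^k$ along $\partial Z$ via $\sigma_A|_{\partial Z}$, and then defines $\hat a\in\mathrm{Hom}(\pi_s^*E,S^*2Z\times\C^k)$ to be $\sigma_A$ on $S^*Z^+$ and the identity on $S^*Z^-$. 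Only with this twisted domain bundle does the triple $(\pi_b^*E,B^*2Z\times\C^k,\hat a)$ represent $e_*(\beta([\sigma_A]_1))$; the verification (Step~4 in the paper) is a direct computation with the topological boundary map and excision. Your claim that ``$e_*$ transforms it into the $K$-theory class of the principal symbol of $2A$'' therefore presupposes precisely the construction you have omitted.

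The analytic equality is also not established. Your first argument (``a compact perturbation realises both \ldots\ from the same asymptotic data'') is a statement of what must be shown, not a proof. Your alternative Mayer--Vietoris argument fails because the sequence \eqref{cyclic2} does not split in general (this is exactly the point made after diagram \eqref{2ces}), so one cannot treat the two ``summands'' separately. The paper instead constructs an explicit comparison family $\hat B_y=\chi_y^+\hat A_y\chi_y^++\chi_y^-\Phi_y\chi_y^-$ using a partition of unity and the local trivialization $\Phi$ of $E$ near $Z^-$, observes that $B_y:=r_y^+\hat B_y e_y^+$ has the same symbol as $A_y$, and then checks directly, using the block-diagonal form of $\hat B_y$ with respect to $L^2(Z_y^+)\oplus L^2(Z_y^-)$, that the kernel bundles of the stabilized families $Q$ and $\hat Q$ are isomorphic.
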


\pf 
Our strategy is to derive the equality of the indices from the classical 
Atiyah-Singer index theorem for families \cite[Theorem (3.1)]{AS}. 
To this end we define an operator family $\hat A$ acting on a vector bundle over 
the double of $Z$ by a gluing technique involving the principal symbol family of $A$. 
We proceed in several steps. 
Step 1 consists of a few preliminary remarks on the choice of the representative 
of the K-theory class of $A$. 
In Step 2 we describe the construction of the bundle. 
We then define the operator family $\hat A$ over $2Z$ in Step 3.  
Its topological index coincides with that of $A$ as we shall see in Step 4. 
The equality of the analytic indices of $A$ and $\hat A$ is
the content of Step 5.  

{\em Step} 1. 
We need to prove that $\ind_t$ and $\ind_a$ 
coincide on $K_1(\ac/\kc)$.
Using that $K_1(\ac/\kc)=K_1(\ac^\dagger/\kc)$ by Theorem \ref{Kth}, an
arbitrary element of $K_1(\ac/\kc)$ is a class  
$[[A]]_1$ (the inner brackets denoting a class in the quotient by the
compacts), for some operator family
$A=(A_y)_{y\in Y}\in M_k(\ac^\dagger)$,  $k\in \mathbb N$,  
such that, for each $y$,  $A_y\colon H_y^k\to H_y^k$ is a Fredholm operator
with symbol in $B$.
It will be convenient to pick a representative with special properties. 
We denote by $C^\infty(S^*X/\asim)$  the subset of $C^\infty(S^*X)$
of functions which factor through $S^*X/\asim$, i.e.\ are independent 
of the co-variable at the boundary.
The algebraic tensor product 
$C_0(U)\otimes C^\infty(S^*X/\asim)$
is dense in $C(U\times S^*X/\asim)$ for every open subset $U$ of $Y$. 
Furthermore, the inclusion of the space of all elements in $C^\infty(S^*X/\asim)$  
which are independent of the co-variable even in a neighborhood of $\partial Z$
into $C^\infty(S^*X/\asim)$ is a homotopy equivalence.
We can therefore assume that the symbol family
$(\sigma_y(A_y))_{y\in Y}$ is given as a finite sum of elements
supported in  open subsets $U$  of $Y$ over which $Z$ is trivial, 
and each of these is a  pure tensor in 
$C_0(U)\otimes C^\infty(S^*X)$
which is independent of the co-variable near the boundary.
Hence it suffices to prove equality for such an $A$. 

{\em Step} 2. 
For each $y\in Y$, let $Z_y^+$ and $Z_y^-$ denote the two copies of $Z_y$ which 
are glued together at $\partial Z_y$ to form $2Z_y$. The map
$i_y\colon \partial Z_y^+\to\partial Z_y^-$ identifies the two copies of 
$\partial Z_y$. We define $E_y$ as the quotient of the disjoint union 
$Z_y^+\times\C^k\cup Z_y^-\times\C^k$ by the equivalence relation that 
identifies the pairs $(x,v)$ and $(x^\prime,w)$ if and only if they are 
equal or $x^\prime=i_y(x)$, $x\in\partial Z_y^+$, and 
$w=\sigma_y(A_y)(x) v$ (remembering that at points of $S^*Z_y$ over 
$\partial Z_y$, $\sigma_y(A_y)$ is independent of the co-vector variable). 
This set $E_y$ naturally becomes a smooth vector 
bundle over $Z_y$. Let $E$ denote the union 
of all $E_y$, which in the same way becomes a vector bundle over $Y$. 

When defining families of smooth manifolds with smooth vector bundles, 
Atiyah and Singer make the technical assumption that the fiberwise vector bundles
are isomorphic to a fixed vector bundle on the typical fiber.
If $Y$ is not connected, this is not necessarily satisfied. 
However, the isomorphism type of $E_y$ depends only on the homotopy 
type of the map $\sigma_y$, in particular only on the component of the space
of all continuous maps from $\partial Z_y$ to $M_k(\C)$ in which it lies. By
the compactness of $Y$, the latter decomposes into finitely many open and
closed subsets over each of which the isomorphism type of $E_y$ is
constant. As the K-theory of $Y$ as well as $\ac/\kc$ split as direct sums
under such disjoint union decompositions of $Y$, and as 
$\ind_a$, $\ind_t$ respect
this, we can restrict to one such subset of $Y$. Then 
we are canonically in the situation of \cite[Definition 1.2]{AS},
i.e.~$E$ is a smooth vector bundle over the family of smooth manifolds $2Z$.

{\em Step} 3.
Let $\pi_s\colon S^*2Z\to 2Z$ denote the canonical projection and  
$S^*Z^+$ and $S^*Z^-$, respectively, the union of all $S^*Z_y^+$ and $S^*Z_y^-$, $y\in Y$. 
The bundle $\pi_s^*E$ can be seen as the disjoint union
of $S^*Z^+\times\C^k$ and $S^*Z^-\times\C^k$ quotiented by the equivalence relation that identifies 
a boundary point $(s,v)$ in $S^*Z^+\times\C^k$ with  
$(s,\sigma_A(s)\cdot v)$ in $S^*Z^-\times\C^k$  .\ 
Similarly, the bundle $S^*2Z\times\C^k$ can be seen as the disjoint union
of $S^*Z^+\times\C^k$ and $S^*Z^-\times\C^k$ quotiented by the equivalence relation that identifies 
a boundary point $(s,v)$ in $S^*Z^+\times\C^k$ with $(s,v)$ in
$S^*Z^-\times\C^k$.
We then define $\hat{a}\in\mbox{Hom}(\pi_s^*E,\,S^*2Z\times\C^k)$ by
\begin{equation}\label{ahat}
\hat{a}(s,v)=\left\{\begin{array}{rl}
\sigma_A(s)\cdot v,\ &\mbox{if}\ (s,v)\in S^*Z^+\times\C^k,\\
v,\ &\mbox{if}\ (s,v)\in S^*Z^-\times\C^k.
\end{array}\right.
\end{equation}
We want to show that $\hat{a}$ is the 
symbol of a continuous family of pseudodifferential operators. 
As any element of $\mbox{Hom}(\pi_s^*E,\,S^*2Z\times\C^k)$, 
our $\hat{a}$ can be regarded as a family $(\hat{a}_y)_{y\in Y}$, $\hat{a}_y\in\mbox{Hom}(\pi_s^*E_y,\,S^*2Z_y\times\C^k)$.
It is easily checked that our 
definition of $\hat{a}$ indeed mends continuously at boundary points. 
But more is true. 
Since $\sigma_y(A_y)$ is smooth and independent of the co-variable 
near the boundary, each $\hat a_y$ is smooth. 
Moreover, since we assumed in Step 1 that 
$a$ is a finite sum of local elementary tensors, 
we see that $\hat{a}$ is the symbol of an Atiyah-Singer family of pseudodifferential operators on $2Z$%
\footnote{Recall that they use a 
slightly stricter definition of operator families: 
While we here require continuity of the family with respect to the 
$L^2(X)$-operator norm, they take into account the norms on the whole range of 
Sobolev spaces.}.

{\em Step} 4. 
Let $\iota \colon K_0(C_0(T^*2Z))\to K(B^*2Z,S^*2Z)\simeq
K(T^*2Z)$ denote the canonical isomorphism 
(we refer to  \cite{Bl} and mainly 
\cite{Karoubi} for topological K-theory
definitions and notation).  
By Definition \ref{indt}, it is enough to show that 
$\iota(e_*(\beta([\sigma_A]_1)))$ is equal to the element of $K(B^*2Z,S^*2Z)$ defined by the triple $(\pi_b^*E,\,B^*2Z\times\C^k, \,\hat{a})$, 
where $\pi_b\colon B^*2Z\to 2Z$ denotes the canonical projection. 

The main step here is to understand $\beta([\sigma_A]_1)$. 
Now, $\sigma_A$ can
and will be considered as a function on $S^*Z/\asim$ with values in
$Gl_k(\complexs)$, representing an element in $K_1(C(S^*Z/\asim))$ and at the
same time the corresponding element of the topological K-theory
$K^1(S^*Z/\asim)$, \cite[3.2]{Karoubi}. Recall from \cite[3.21]{Karoubi}
that for the pair of compact topological spaces
$S^*Z/\asim\ \subset B^*Z/\asim$, 
 the boundary map in topological K-theory assigns to $\sigma_A$ 
the relative K-class 
$((B^*Z/\asim)\times \C^k,(B^*Z/\asim)\times\C^k,\sigma_A)$, 
corresponding under the excision isomorphism $K((B^*Z/\asim),$ $(S^*Z/\asim))\iso
K(B^*Z,S^*Z)$ to $(B^*Z\times \C^k,B^*Z\times \C^k,\sigma_A)$, compare
\cite[2.35]{Karoubi}. Moreover, this corresponds to $\beta$ under the
isomorphism 
with C$^*$-algebra K-theory. 
We next have to compute the map 
$e^{top}\colon K(B^*Z,S^*Z)\to K(B^*2Z,S^*2Z)$ in topological K-theory, representing
$e_*\colon K_0(C_0(T^*Z))\to K_0(C_0(T^*2Z))$. 
Recall, however, that 
$e^{top}(V,W,\tau)$ is given by any extension $\tilde V$ of $V$, $\tilde W$ of $W$
to $B^*2Z$ and an extension of $\tau$ to an isomorphism $\tilde \tau$ between $\tilde V$ and
$\tilde W$ on all of $(B^*2Z\setminus B^*Z)\cup S^*Z$, $\tilde \tau$ finally
restricted to $S^*2Z$. Finally, observe that
$(\pi_b^*E, B^*2Z\times \C^k, \hat{a})$ provides exactly such an extension (as
$\hat{a}$ extends as $id$ over all of $B^*2Z\setminus B^*Z$) and therefore
represents $\iota e_*(\beta([\sigma_A]))$, as we had to prove.

{\em Step} 5. 
In order to show that the analytic indices coincide, we will introduce 
yet another operator family.
Since  $\sigma(A)$ is independent of the co-variable near the boundary, 
there is an open set $U\subseteq 2Z$ containing $Z^-=\cup_yZ^-_y$ 
and a bundle isomorphism 
\[
\Phi\colon  E|_{U}\longrightarrow U\times\C^k
\]
such that the restriction of $\hat{a}$ to $\pi_s^{-1}(U)$ is equal to the pullback of $\Phi$ by $\pi_s$.
Let $(\chi_y^+)_{y\in Y}$ and $(\chi_y^-)_{y\in Y}$ be continuous families of smooth functions on $2Z$ with 
$0\leq\chi_y^\pm\leq 1$, $(\chi_y^+)^2+(\chi_y^-)^2=1$.
Moreover, let the support of each $\chi^+_y$ be contained in the interior of $Z_y^+$ and  $\chi_y^+\equiv 1$ 
outside a neighborhood of $\partial Z_y^+$ in $U$.  
Then
\[
\hat{B}_y=\chi^+_y\hat{A}_y\chi^+_y+\chi^-_y\Phi_y\chi^-_y,
\]
defines a family of pseudodifferential operators in the sense of Atiyah and Singer 
which has the same principal symbol -- and hence the same analytic index --
as $\hat{A}$. 

For each $y\in Y$, we canonically identify the space $L^2(E_y)$ of $L^2$-sections of $E_y$ with the direct sum 
$L^2(Z_y^+;\C^k)\oplus L^2(Z_y^-;\C^k)$ and denote by $e_y^\pm$ and $r_y^\pm$ the maps of extension by zero and restriction,
\[
e_y^\pm\colon L^2(Z_y^\pm;\C^k)\to L^2(E_y)\ \ \ \mbox{and}\ \ \ r_y^\pm\colon L^2(2Z_y;\C^k)\to L^2(Z_y^\pm;\C^k). 
\]
Then  $B_y=r_y^+\hat{B}_ye_y^+$ defines a continuous family 
$B=(B_y)_{y\in Y}$ in $M_k(\ac)$.
As $\sigma(A)=\sigma(B)$ (and hence $\gamma(A)=\gamma(B)$), it suffices to prove that the analytic indices of $B$ and $\hat{B}$ are equal.

Proposition (2.2) of \cite{AS}, applied to the family $\hat{B}$ provides us with sections 
$s_y^j\in C^\infty(2Z_y;\C^k)$, $y\in Y$, $1\leq j\leq q$, such that 
\[
\begin{array}{rcl}
\hat{Q}_y\colon C^\infty(2Z_y;E_ y)\oplus\C^q&\longrightarrow&C^\infty(2Z_y;\C^k)\\
(u;\lambda_1,\cdots,\lambda_q)&\longmapsto&\hat{B}_y(u)+\sum_{j=1}^{q}\lambda_js^j_y
\end{array}
\]
is onto, $\ker \hat{Q}=(\ker \hat{Q}_y)_{y\in Y}$ is a vector bundle and the analytic index of $\hat{B}$ is equal to 
$[\ker \hat{Q}]-[Y\times\C^q]$. 
Now let $t_y^j=r_y^+s_y^j\in C^\infty(Z_y;\C^k)$. 
The continuity with respect to $y$ that we
get from \cite[Proposition (2.2)]{AS} is enough to ensure that $(t_y^j)_{y\in Y}$ is a continuous section of our bundle of Hilbert spaces ${\displaystyle \bigcup_{y\in Y}L^2(Z_y;\C^k)}$. 
We then define
\[
\begin{array}{rcl}
Q_y\colon L^2(Z_y;\C^k)\oplus\C^q&\longrightarrow&L^2(Z_y;\C^k)\\
(u;\lambda_1,\cdots,\lambda_q)&\longmapsto&B_y(u)+\sum_{j=1}^{q}\lambda_jt^j_y
\end{array}
\]
Since $B_y$ is elliptic, $\ker Q_y\subset C^\infty(Z_y;\C^k)$. Using that $\Phi_y$ is local, it is straightforward to check that 
\[
\hat{B}_y= e_y^+r_y^+\hat{B}_ye_y^+r_y^+ +e_y^-r_y^-\hat{B}_ye_y^-r_y^-=e_y^+B_yr_y^++e_y^-r_y^-\Phi_ye_y^-r_y^-
\]
and, hence, $\ker Q_y$ and $\ker\hat{Q}_y$ are isomorphic for each $y$ (because $\Phi$ is an isomorphism). Moreover, $Q_y$ is also surjective: Given $v\in L^2(Z_y;\C^k)$, if $u\in L^2(2Z_y;E_y)$ is a preimage of $e^+_yv$ under $\hat{Q}_y$, then
$r^+_yu$ is a preimage of $v$ under $Q_y$. Hence the analytic index of $B$ is given by $[\ker Q]-[Y\times\C^q]$. The bundles 
$\ker Q=(\ker Q_y)_{y\in Y}$ and $\ker\hat{Q}$ are isomorphic and then
\[
\ind_a(B)=[\ker Q]-[Y\times \C^q]=[\ker\hat Q]-[Y\times\C^q]=\ind_a(\hat{B}),
\]
as we wanted.
\cqd

\section{Nontrivial bundles}\label{sec:non-trivial_bundles}

In this section we discuss families of Boutet de Monvel operators acting between vector bundles. The case considered 
in the first two sections correspond to the case of trivial bundles over the manifolds and the zero bundle over 
the boundary.  

In addition to the data assumed up to this point (a bundle of manifolds
$\pi\colon Z\to Y$ with fiber $X$), we take smooth vector bundles $E$ and $F$
over $X$ and $\partial X$, respectively. 
Let $\diff(\partial X,F)$ denote the group of diffeomorphisms of $F$ which map fibers to fibers linearly, 
and let $G_E$ denote the group of diffeomorphisms of $E$ which map fibers to fibers linearly and 
whose restrictions to the base belong to the group $G$ defined on page~\pageref{defG}. 
We equip $\diff(\partial X,F)$ with its canonical 
topology \cite[page 123]{AS} and do a similar construction for $G_E$. Note
that there are homomorphisms ``forget the action in the fiber''
$h_\partial\colon \diff(\partial X,F)\to \diff(\partial X)$ and $h\colon
G_E\to G$. Define the fiber product group
\begin{equation*}
G_r:=\{(\phi,\psi)\in
\diff(\partial X,F)\times G_E\mid h_\partial(\phi)=h(\psi)\}.
\end{equation*}
Let $(p\colon
\tilde E\to Z;\; q\colon \tilde F\to\partial Z)$ be maps such that $(\pi\circ
p\colon \tilde E\to Y; \; \pi_\partial\circ q\colon \tilde F\to Y)$ are
bundles with, respectively, 
fibers $E$ and $F$ and structure group $G_r$. It follows
  that, for each pair of local
trivializations $(\alpha,\beta)$ of $(\pi\circ p\colon \tilde E\to Y;\; F\to
Y)$  
there are local trivialization $\alpha_0$ of $\pi\colon Z\to Y$ and $\beta_0$
of $\partial Z\to Y$ such that the diagram 
\begin{equation}
\label{hyp}
\def\mapup#1{\Big\uparrow\rlap{$\vcenter{\hbox{$\scriptstyle#1$}}$}}
\def\mapdn#1{\Big\downarrow\rlap{$\vcenter{\hbox{$\scriptstyle#1$}}$}}
\begin{array}{ccc}
(\pi\circ p)^{-1}(U)&{\mathop{\longrightarrow}\limits^{\alpha}}&U\times E\\
\mapdn{p}&&\Big\downarrow\\
\pi^{-1}(U)&{\mathop{\longrightarrow}\limits^{\alpha_0}}&U\times X
\end{array}
\end{equation}
commutes, where the right vertical arrow is the identity on $U$ times the bundle projection on $E$. 
This defines a vector bundle structure for $p\colon \tilde E\to Z$. Moreover, for each $y\in Y$, the 
restriction of $p$ to $\tilde E_y=(\pi\circ p)^{-1}(y)$
defines a smooth vector bundle $p_y\colon \tilde E_y\to Z_y$, isomorphic to
$E\to X$. We obtain the corresponding result for the the map
$q$ and get a vector bundle $q\colon \tilde F\to \partial Z$ and, 
for each $y\in Y$, a smooth vector bundle $q_y\colon \tilde F_y\to\partial Z_y$ isomorphic to $F\to \partial X$.

Choose now, in addition to the family of Riemannian metrics $(g_y)_{y\in Y}$
families of Hermitean metrics on $E_y$ and $F_y$ which depend continuously on
$y\in Y$. Using them, we get families of Hilbert spaces $H_y:=
L^2(Z_y;E_y)\oplus L^2(\partial Z_y; F_y)$ which patch together to a bundle of
Hilbert spaces. Let $\A(E,F)_y$ denote the C$^*$-subalgebra of the algebra of all bounded operators on 
$H_y$ generated by the polyhomogeneous Boutet de Monvel operators of 
order and class zero.

Exactly as \cite[Proposition 1.3]{AS} our Proposition \ref{prop:conj_is_cont}
generalizes to the case of non-trivial bundles and their diffeomorphisms and
is the basis for the generalization of Corollary
\ref{corol:bundle_of_C_algebras} to the case of non-trivial bundles: the
$\A(E,F)_y$ form in a canonical way a continuous bundle of C$^*$-algebras,
which we continue to call $\aleph$ by abuse of notation.

Let $\mathfrak{A}$ denote the set of continuous sections of the bundle
$\aleph$, forming again a C$^*$-algebra with pointwise operations and supremum
norm. The K-theory results of Section~\ref{sec1}\ can be extended to this
more general setting using arguments similar to those used in \cite{MSS}. In
particular, the analytic and topological index 
given in Section~\ref{index} can also be defined as maps $K_1(\ac)\to
K(Y)$. Theorem~\ref{indthm} then extends to this more general setting. 

\begin{remark}
  Variants of Theorem   \ref{indthm}, the family index theorem for the Boutet
  de Monvel algebra
 for real K-theory or for equivariant K-theory should hold as
  well, and one should be able to derive them along the lines used in the
  present article.
\end{remark}


\begin{appendix}
  \section{Reduction of the structure group}\label{sec:structgroup}

Let, as in the main body of the text, $X$ be a compact smooth manifold with
boundary $\partial X$, and fix a collar diffeomorphism $\delta\colon U\to \partial X\times
[0,1)$ with collar coordinate $x_n$. Recall that $G$ was
defined as the subgroup of the diffeomorphism group $\diff(X)$ of those
diffeomorphisms which respect the product structure and collar coordinate for
$x_n\in [0,1/2)$. For convenience, in the 
text we were working with bundles of 
manifolds modelled on $X$ and with structure group $G$, i.e.~with a
canonically defined collar of the boundary in each fiber of the bundle.

In this appendix, we prove that, for any bundle (over a paracompact space) with structure group
$\diff(X)$ we have a unique (up to isomorphism) reduction to the structure
group $G$. In other words, the functor from bundles (over a given paracompact
base) with structure group $G$
to bundles with structure group $\diff(X)$ which ``forgets the collar'' is an equivalence of categories. [This is similar to the (unique
up to isomorphism) choice of a 
Riemannian metric on a given finite dimensional vector bundle: reduction of
the structure group from $Gl(n)$ to $O(n)$.]

It is well known that we get this unique reduction of structure group if the
inclusion $G\to 
\diff(X)$ is a homotopy equivalence, compare \cite{Do} for a rather refined
version of this fact. We therefore show
\begin{theorem}\label{theo:red_to_G}
  The inclusion $G\to \diff(X)$ (and therefore the corresponding map $BG\to
  B\diff(X)$) 
  are homotopy equivalences. 
\end{theorem}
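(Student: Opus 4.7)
The plan is a fibration argument. The restriction map $\mathrm{res}\colon \diff(X) \to \diff(\partial X)$, $\phi \mapsto \phi|_{\partial X}$, is a locally trivial fiber bundle by a classical theorem of Palais, with fiber $\diff_\partial(X) := \{\phi \in \diff(X) : \phi|_{\partial X} = \mathrm{id}\}$. Analogously, the map $\mathrm{res}_G\colon G \to \diff(\partial X)$, $g \mapsto \varphi_g$, is a locally trivial fiber bundle with the same base: surjectivity follows by lifting $\varphi \in \diff(\partial X)$ via its product extension $(x', x_n) \mapsto (\varphi(x'), x_n)$ on the collar, smoothly cut off to the identity outside; local trivializations are constructed analogously. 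Its fiber is $G_\partial := \{g \in G : g|_{\partial X} = \mathrm{id}\}$, consisting of diffeomorphisms equal to the identity on the fixed collar $\partial X \times [0, 1/2)$. The inclusion $G \hookrightarrow \diff(X)$ is a map of fibrations over $\diff(\partial X)$ inducing the identity on bases.

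The main reduction is to prove that the fiber inclusion $G_\partial \hookrightarrow \diff_\partial(X)$ is a weak homotopy equivalence. I would construct a continuous deformation retraction of $\diff_\partial(X)$ onto $G_\partial$, confined to the collar via a smooth cutoff. Writing $\phi \in \diff_\partial(X)$ in collar coordinates as $\phi(x', x_n) = (\phi_1(x',x_n), \phi_2(x',x_n))$ with $\phi_1(x',0) = x'$ and $\phi_2(x',0) = 0$, the deformation proceeds in two stages. First, the rescaling isotopy $\phi_s := R_{1/s} \circ \phi \circ R_s$, where $R_s(x', x_n) = (x', s x_n)$, is defined for $s \in (0,1]$ on the collar and extends continuously to $s = 0$ as the normal linearization $(x', a(x')x_n)$ with $a(x') := \partial_{x_n}\phi_2(x', 0) > 0$. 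Second, linear interpolation of the positive function $a(x')$ to the constant $1$ yields the identity on a fixed collar. Gluing to $\phi$ outside via a smooth cutoff gives the required deformation retraction, which is natural in $\phi$ and constant on $G_\partial$.

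Applying the five-lemma to the commutative ladder of long exact homotopy sequences associated to the two fibrations (with common base $\diff(\partial X)$) then shows that $G \hookrightarrow \diff(X)$ is a weak homotopy equivalence. Since $\diff(X)$ and $G$ are Fr\'echet manifolds of CW homotopy type (Palais), this upgrades to a genuine homotopy equivalence; the corresponding statement for classifying spaces follows by functoriality of the $B$-construction.

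The main obstacle is the fiber step: ensuring the explicit isotopies above genuinely lie in $\diff_\partial(X)$ for every parameter value (in particular, that the rescaled-and-glued maps remain smooth diffeomorphisms of all of $X$), and that they depend continuously on $\phi$ in the Fr\'echet topology. This amounts to a parameterized form of the classical collar-neighborhood uniqueness theorem (Cerf, Hirsch) and, while standard, requires careful control of the cutoffs and of the convexity used to deform $a(x')$ through positive functions.
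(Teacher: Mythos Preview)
Your approach is valid and genuinely different from the paper's. The paper argues the weak homotopy equivalence directly: given a map $f\colon (K,K_0)\to(\diff(X),G)$ from a compact CW-pair, it builds an explicit homotopy rel $K_0$ into $G$ by deforming the collar data $(\varphi(x',t;k),\tau(x',t;k))$ in two stages---first straightening the normal coordinate $\tau$ to equal $t$ near $t=0$ via a carefully chosen cutoff $a(t)$, then making $\varphi$ independent of $t$ by precomposing with a reparametrization family $\rho_s^\eta$---followed by a collar rescaling to hit the fixed width $1/2$. The delicate point, which the paper handles in some detail, is checking that every intermediate map remains a diffeomorphism (submersion plus covering-map argument, using compactness of $K$ to choose $\eta$ uniformly).

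Your fibration-and-five-lemma route is more modular: it isolates the nontrivial content in the fiber comparison $G_\partial\hookrightarrow\diff_\partial(X)$, which is exactly a parametrized collar-uniqueness statement. This buys conceptual clarity at the cost of invoking the Cerf--Palais local-triviality theorem for $\diff(X)\to\diff(\partial X)$ as a black box, and you must also verify that $G\to\diff(\partial X)$ is a bundle (local sections via product extensions, as you indicate; this works because the map is a group homomorphism with a local section near the identity). The rescaling isotopy you sketch for the fibers is essentially the same mechanism as the paper's two-stage deformation, just restricted to diffeomorphisms fixing $\partial X$; the obstacle you correctly flag---gluing the rescaled map to $\phi$ away from the boundary while staying inside $\diff_\partial(X)$---is precisely where both arguments do real work. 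One caution: claiming an honest deformation retraction is slightly stronger than you need and makes the gluing more delicate; it is cleaner (and sufficient) to prove a relative weak equivalence for maps from compact CW-pairs into the fiber, paralleling what the paper does globally.
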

\begin{proof}
Observe first that $G$ and $\diff(X)$ as well as $BG$ and $B\diff(X)$ are
paracompact 
Fr\'echet manifolds by \cite[Sections 41, 42, 44.21]{KrieglMichor} (the
reference is for $\diff(X)$, but the proofs easily generalize to
$G$). Therefore it
suffices by \cite[Theorem 15]{Palais} to 
show that $G\to \diff(X)$ is a weak homotopy equivalence and it follows
automatically that it is a homotopy equivalence. 

 To show that the map is a weak homotopy equivalence, we have for a 
  continuous map $f\colon K\to\diff(X)$, where $K$ is a compact CW-complex, to
  construct a homotopy $f_s$ from $f_0=f$ to an $f_1$ which takes values in
  $G$. Moreover, the homotopy should be constant on every CW-subcomplex $K_0$
  of $K$ where $f$ already maps to $G$. Note that $K_0$ is a deformation
   retract of a neighborhood $U$, i.e.~there is a homotopy $h
 \colon K\times [0,1]\to K$ from the identiy to $h_1$ such that $h_1(U)=K_0$ and
 such that $h_t$ is the identity on $K_0$. Be precomposing with $h_1$ we can
 therefore assume that $f$ maps the neighbourhood $U$ of $K_0$ to $G$.
 
Let us now construct the family $f_t$.
  Choose $\eta\in (0,1]$ such that $\tilde f(k)=\delta\circ
  f(k)\circ\delta^{-1}$ maps $\partial X\times[0,\eta)$ to $\partial X\times
  [0,1)$ for all $k\in K$ and write $\tilde
  f(k)(x^\prime,t)=(\varphi(x^\prime,t;k),\tau(x^\prime,t;k))$.  

  In two steps we shall now first deform $\tau$ to a function $\hat\tau$ which
  equals $t$ for small $t$ and then $\varphi$ to a function which depends only
  on $x'$ for small $t$.

  Observe that, as $f(k)$ is a diffeomorphism of a manifold with boundary,
  $\frac{\partial \tau}{\partial t}>0$ and therefore, 
  by the compactness of $K$, if we choose $\eta$ small enough,
  $C>\frac{\partial \tau}{\partial t}>c>0$ for some $C>c>0$ on all of
  $K\times \partial X\times [0,\eta)$.

   Pick a smooth function $a\colon [0,\eta)\to [0,1]$ such that $a(t) \equiv
   0$ for $t$ close to zero, $a(t)\equiv1$ for $t$ close to $\eta$ and such
   that
 $$\hat \tau(x',t;k)= (1-a(t))t+a(t)\tau(x',t;k),
 \quad (x',t)\in\partial X\times[0,\eta), $$ 
satisfies $\partial \hat \tau(x',t;k)/\partial t\ge c/2$ for  every $x'\in\partial X$ end every $k\in K$. To construct
 such an $a$, we use the uniform growth of $\tau$: Choose, for some given
  $\varepsilon>0$, the function $a$ so that $(1-a)t$ is monotonely increasing
  on the interval $[0,4\varepsilon]$ with $(1-a)t=t$ on $[0, \varepsilon]$ and
  $(1-a)t=2\varepsilon$ on $[3\varepsilon,4\varepsilon]$.  Then $a$ is
  necessarily increasing with $a\equiv 0$ near $0$ and $a(4\varepsilon)=1/2$.
  Moreover, $\hat\tau$ is strictly increasing as $\tau$ is.  Finally choose
  $a$ on $[4\varepsilon, \eta]$ such that $(1-a)t$ monotonely decreases to $0$
  and equals zero on $[\eta-\varepsilon, \eta]$.  Moreover, we arrange for the
  derivative $\partial_t((1-a)t)$ to be always $\ge
  -2\frac{2\varepsilon}{\eta-5\varepsilon}$.  Again, $a$ is necessarily
  increasing with $a\equiv 1$ near $\eta$.  The derivative $\partial_t(a\tau)$
  can therefore be estimated from below by $c/2$.  For $\varepsilon$
  sufficiently small, we will therefore have
  $2\frac{2\varepsilon}{\eta-5\varepsilon}<c $ and thus
  $\partial_t\hat\tau(x',t;k)>0$ for all $x',t,k$.   
 Note that
then $\hat\tau(x',t;k) = t $ for $t$ close to zero, and $\hat
\tau(x',t;k) = 
\tau(x',t;k)$ for $t$ close to $\eta$, uniformly in $k$.  
We
then let
$$\tau_s= s\hat\tau+(1-s) \tau,\quad 0\le s\le 1.$$
Then $\frac{\partial \tau_s}{\partial t}\ge c/2$ on $K\times \partial X\times
[0,\eta)$. 

For the second step fix a smooth function $\rho\colon [0,1)\to [0,1)$
with $\rho(t) =0$ for $t<\varepsilon$ and $\rho(t) = t$ for
$t>1-\varepsilon$.  Next choose a smooth family of smooth functions
$\rho_s$, $0\le s\le1$ such that $\rho_0$ is the identity and
$\rho_1=\rho$. By compactness, we have a uniform bound $|d\rho_s(t)/dt|\le R$.
For a given $\eta>0$, define $\rho^\eta_s(t)\colon [0,\eta)\to [0,\eta);
t\mapsto \eta\rho_s(\eta^{-1} t)$. Then still $|d\rho_s^\eta/dt|\le R$, even
independently of $\eta$.

Let $\varphi_s(x',t):=\varphi(x',\rho^\eta_s(t))$ and $\tilde f_s(k)(x^\prime,t)=
(\varphi_s(x^\prime,t),\tau_s(t))$.  Then $ \tilde f_s$ equals the given
$\tilde f$ for $t$ close to $\eta$. Therefore $f_s= \delta^{-1}\circ \tilde
f_s\circ \delta$ extends (independently of $s$) to a self-map of
$X$. Moreover, $|{\frac{\partial\tau_s}{\partial x'}}|\le 
|{\frac{\partial \tau}{\partial x'}}|$ for all $s$.  And for $t=0$ we have
  $\frac{\partial \tau}{\partial x'}=0$.
On the other hand, $\frac{\partial \rho_s}{\partial
  x'}|_{(x',t)}=\frac{\partial \rho}{\partial x'}|_{(x',\rho_s(t))}$ is, for
$\eta$ small enough,
 invertible on $[0,\eta]$ with uniform bound on the norm of the inverse (and
 with better bounds if we choose $\eta$ smaller), and $|\frac{\partial
   \phi_s}{\partial t}(x',t)|= |\frac{\partial 
  \phi}{\partial t}_{(x',\rho_s(t))}|\cdot |d\rho^\eta_s/dt(t)|$ which is uniformly
bounded, independent of $\eta$. 
 
    By choosing $\eta$ small enough, 
therefore $\partial\tau_s$ will be linearly independent from $\partial\varphi(x',\rho_s(t))$
and so $f_s(k)$ is a submersion for all $s,k$. 
We check that we actually constructed diffeomorphisms. We made our
construction such that all the maps $f_s(k)$ are submersions which map the
boundary to itself, therefore the image is an open subset of $X$. As $X$ is
compact, the image is also closed, and the map being a local diffeomorphism,
is a covering map. Because it is homotopic to the diffeomorphism $f(k)$, it is
a trivial covering map and therefore a diffeomorphism.

It is obvious that $f_0=f$ and $f_1(k)$ lies in the variant of $G$ where $1/2$
is replaced by $\eta-\epsilon$.

Next,  we compose 
with a family of reparametrizations of the collar $[0,1)$ which stretches
$[0,\eta-\epsilon)$
to $[0,1/2)$ such that in the end we really map to $G$. Note that our
construction is carried out in such a way that for $k\in U$, where $f(k)$ was
already in $G$, $f_s(k)\in G$ for all $s$, although, because of the last
reparametrization step, not necessarily $f_s(k)=f(k)$. 

Therefore, finally, we choose a function $\beta\colon K\to [0,1]$ which is $1$ outside
$U$ and $0$ on $K_0$ and replace the homotopy $f_s(k)$ with $f_{\beta(k)s}(k)$.

This yields the desired homotopy from $f_0=f$ to an $f_1$ taking values in
$G$.  Moreover, the mapping is constant on $K_0$.

\end{proof}


\section{The K\"unneth formula}\label{sec:kunneth}
By the ``K\"unneth formula'', we mean the following theorem of Schochet
\cite{S}:

\begin{thm} \label{KS} Let A and B be C$^*$-algebras with $A$ in the smallest
  subcategory of the category of separable nuclear C$^*$-algebras which
  contains the separable Type I algebras and is closed under the operations of
  taking ideals, quotients, extensions, inductive limits, stable isomorphism,
  and crossed product by $\Z$ and by $\R$. Then there is a natural
  $\Z/2$-graded exact sequence
  \begin{equation}\label{KAS}
    0\to K_*(A)\otimes K_*(B)\to K_*(A\otimes B)\to\mbox{{\em Tor}}(K_*(A),K_*(B))\to 0.
  \end{equation}
\end{thm}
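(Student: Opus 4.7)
My plan is to follow Schochet's original strategy \cite{S}, which is a ``bootstrap'' argument: one identifies the full subclass $\mathcal{N}'\subseteq\mathcal{N}$ of C$^*$-algebras $A$ for which the sequence \eqref{KAS} is exact for every $B$, and then shows that $\mathcal{N}'$ contains $\complexs$ and is stable under all the operations used to build $\mathcal{N}$. The natural map $\alpha\colon K_*(A)\otimes K_*(B)\to K_*(A\otimes B)$ is the external product (Kasparov product), so the first task is to verify its existence and naturality; this is by now a standard construction that I would take for granted.

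First I would check the base case: when $A=\complexs$, both sides of \eqref{KAS} reduce trivially to $K_*(B)$, the $\mathrm{Tor}$ term vanishes because $K_*(\complexs)=\Z$ is free, and the sequence is exact. Next I would verify stability: if $A\in\mathcal{N}'$, then $A\otimes \mathcal{K}\in\mathcal{N}'$, which follows immediately because the ambient tensor products and K-groups are Morita/stably invariant. Closure under inductive limits follows from the commutation of K-theory, tensor product, and $\mathrm{Tor}$ with inductive limits.

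The heart of the argument is closure under extensions $0\to I\to A\to A/I\to 0$. Given such an extension (and any fixed $B$), tensoring with $B$ yields another short exact sequence $0\to I\otimes B\to A\otimes B\to (A/I)\otimes B\to 0$ (nuclearity of $A$ ensures exactness). From the six-term K-theory sequences of these two extensions one forms, after tensoring the K-theory sequence of $I\to A\to A/I$ with $K_*(B)$, a commutative ladder. A homological-algebra argument (essentially the long exact sequence for $\mathrm{Tor}$ of a short exact sequence in the first variable, combined with the five lemma) shows that if two of $\{I,A/I,A\}$ lie in $\mathcal{N}'$, so does the third. This step is the main technical obstacle: one has to handle the $\mathrm{Tor}$ term carefully, keeping track of the $\Z/2$-grading, and verify that the connecting maps match up; this is the content of the ``Künneth spectral sequence'' collapse that underlies Schochet's theorem.

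Finally, closure under crossed products by $\Z$ follows from the Pimsner--Voiculescu exact sequence (with an analogous five-lemma/ladder argument), and closure under crossed products by $\R$ from Connes' Thom isomorphism, which implies that $A\rtimes\R$ has K-theory isomorphic to that of $A$ with a degree shift, so the Künneth formula transfers directly. Since all separable Type~I C$^*$-algebras are built from $\complexs$ by iterated application of the operations listed above, they lie in $\mathcal{N}'$, and by definition $\mathcal{N}\subseteq\mathcal{N}'$, which proves the theorem. The application needed in the main text, where we take $A=C_0(U)$ and $B=\mathcal{A}$ (both nuclear, $A$ commutative and hence Type~I), falls well within this class.
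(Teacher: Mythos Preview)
Your outline is a reasonable sketch of Schochet's bootstrap argument, but note that the paper does not prove this theorem at all: it is stated in the appendix as a quotation of Schochet's result \cite{S} and then applied to prove Proposition~\ref{kun}. So there is no ``paper's own proof'' to compare against; the authors simply cite the literature.

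That said, one step in your sketch deserves a caution. You write that ``all separable Type~I C$^*$-algebras are built from $\complexs$ by iterated application of the operations listed above,'' and use this to conclude that Type~I algebras lie in $\mathcal{N}'$. This is the substantive structural input, and it is not quite as immediate as you make it sound: one needs the composition-series description of Type~I algebras (with CCR subquotients), the fact that CCR algebras are inductive limits of continuous-trace algebras, and a local triviality/Mayer--Vietoris argument to reduce continuous-trace algebras to commutative ones stabilized by $\mathcal{K}$. Only then does the bootstrap genuinely reach all Type~I algebras. Your sketch is otherwise faithful to the strategy of \cite{S}, but for the purposes of the present paper a citation is all that is required.
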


We use this Theorem to prove a statement made in the proof of Theorem
\ref{biso}:

\begin{pro}\label{kun} $b_{U*}\colon K_i(C_0(\partial Z_U))
  \to K_i(\im \gamma_U)$ is an isomorphism, $i=0,1$.
\end{pro}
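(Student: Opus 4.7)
The plan is to deduce this from the single-manifold isomorphism together with the K\"unneth formula (Theorem~\ref{KS}), applied to tensor products with $C_0(U)$.

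\textbf{Step 1: Identify the map as a tensor product.} As noted in the proof of Theorem~\ref{biso}, the triviality of $\pi$ over $U$ gives canonical isomorphisms
\[
\ac_U\cong C_0(U,\A)\cong C_0(U)\otimes \A,\qquad
C_0(\partial Z_U)\cong C_0(U)\otimes C(\partial X).
\]
Under these identifications, $\gamma_U$ corresponds to $\mathrm{id}_{C_0(U)}\otimes\gamma$, so
\[
\im\gamma_U \cong C_0(U)\otimes \im\gamma,
\]
and $b_U$ corresponds to $\mathrm{id}_{C_0(U)}\otimes b$, where $b\colon C(\partial X)\to\im\gamma$ is the single-manifold map of \cite{MNS}. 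By \cite[Corollary~8]{MNS}, $b_*\colon K_i(C(\partial X))\to K_i(\im\gamma)$ is an isomorphism for $i=0,1$.

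\textbf{Step 2: Verify the K\"unneth hypotheses.} The algebra $C_0(U)$ is commutative, hence separable and of type~I (replacing $Y$ by a second countable refinement if necessary, which is harmless for our indexing purposes), so it lies in the bootstrap class of Theorem~\ref{KS}. Therefore the K\"unneth sequence~\eqref{KAS} is available for both $A\otimes B$ where $A=C_0(U)$ and $B$ is either of $C(\partial X)$ or $\im\gamma$.

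\textbf{Step 3: Apply naturality and the five lemma.} The K\"unneth sequences are natural in the second variable. Applying $\mathrm{id}\otimes b$ therefore yields a commutative diagram of short exact sequences
\[
\begin{array}{ccccccccc}
0\to & K_*(C_0(U))\otimes K_*(C(\partial X)) & \to & K_*(C_0(U)\otimes C(\partial X)) & \to & \mathrm{Tor}(K_*(C_0(U)),K_*(C(\partial X))) & \to 0 \\
& \Big\downarrow\scriptstyle{1\otimes b_*} & & \Big\downarrow\scriptstyle{(\mathrm{id}\otimes b)_*} & & \Big\downarrow\scriptstyle{\mathrm{Tor}(1,b_*)} & \\
0\to & K_*(C_0(U))\otimes K_*(\im\gamma) & \to & K_*(C_0(U)\otimes \im\gamma) & \to & \mathrm{Tor}(K_*(C_0(U)),K_*(\im\gamma)) & \to 0.
\end{array}
\]
Since $b_*$ is an isomorphism, both outer vertical maps are isomorphisms (tensor product and $\mathrm{Tor}$ are functorial in each variable and preserve isomorphisms). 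The five lemma gives that the middle vertical map is an isomorphism. Translating back through the identifications of Step~1, this is precisely the statement that $b_{U*}\colon K_i(C_0(\partial Z_U))\to K_i(\im\gamma_U)$ is an isomorphism.

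The only subtle point is the verification that $C_0(U)$ lies in the bootstrap class, i.e.\ that we may invoke Schochet's theorem as stated; this is automatic here since every commutative separable C$^*$-algebra is type~I, and the compactness of $Y$ reduces all arguments to second countable situations. Everything else is a formal manipulation of the K\"unneth sequence, so no further obstacle arises.
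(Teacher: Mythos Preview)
Your proof follows essentially the same strategy as the paper's: identify $b_U$ with $\mathrm{id}_{C_0(U)}\otimes {\tt b}$, invoke the K\"unneth sequence for each tensor product, and conclude by naturality plus the five lemma. The one substantive difference is \emph{which} tensor factor you place in the bootstrap position. You put $C_0(U)$ there, which forces you to argue that it is separable; since $Y$ is only assumed compact Hausdorff, $U$ need not be second countable and $C_0(U)$ need not be separable. Your parenthetical ``replacing $Y$ by a second countable refinement if necessary'' does not name an actual construction and is not obviously harmless, so as written this is a gap.

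The paper sidesteps the separability issue entirely by swapping the roles: it puts $C(\partial X)$, respectively $C=\im\gamma$ (the single-manifold boundary-symbol image), in the bootstrap position, leaving $C_0(U)$ as the unconstrained second factor. Since $\partial X$ is a compact manifold, $C(\partial X)$ is separable type~I automatically; the extra work is to check that $C$ lies in the bootstrap class, which the paper handles by observing that $C$ sits inside $C(S^*\partial X)\otimes\mathcal{T}$ with $\mathcal{T}$ the Toeplitz algebra (type~I), and then invoking the closure properties of the bootstrap class. If you adopt this swap your argument goes through verbatim for arbitrary compact Hausdorff $Y$, with no separability caveat needed.
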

\pf Let $A=C_0(U)$
and $B=C(\partial X)$.  Then $\im \gamma_U$
is equal to $A\otimes C$, where $C$ is the image of the boundary principal
symbol for the single manifold $X$.  As explained in the Introduction of
\cite{MNS}, $C$ can be regarded as a C$^*$-subalgebra of $C(S^*\partial
X)\otimes\mathcal{T}$, where $\mathcal{T}$ denotes the Toeplitz algebra.
Since $\mathcal{T}$ belongs to the category defined in the statement of
Theorem \ref{KS} (see Examples 5.6.4 and 6.5.1 in \cite{M}), we may apply
Schochet's theorem for $A\otimes B$ and for $A\otimes C$.

Now let ${\tt b}\colon C(\partial X)\to C$ be the map analogous to the map $b$
defined right before the statement of Theorem~\ref{biso}. In
\cite[Section 3]{MNS}, it is proven that ${\tt b}$ induces a
K-theory isomorphism ({\tt b} was denoted $b$ in \cite{MNS,MSS}). Using that
the exact sequence of Theorem~\ref{KS} is natural, we can map (\ref{KAS}) to
the corresponding sequence obtained by replacing $B$ with $C$. Since the maps
induced by ${\tt b}$ are isomorphisms, it follows from the five-lemma that the
maps induced by $b_U=\mbox{id}_A\otimes{\tt b}$ are also isomorphisms. \cqd
\end{appendix}

\section*{Acknowledgements}
We greatly benefited from numerous discussions with our friends 
Johannes Aastrup and Daniel Tausk. 
We thank them for their generosity and for the great time we had talking Math to them. We are also grateful to Jochen Ditsche for pointing out Proposition 
\ref{ditsche} to us. 
Severino Melo was partially supported by a grant from the Brazilian agency
CNPq (Processo 304783/2009-9). Thomas Schick was partially supported by the
Courant Center ``Higher order structures of mathematics'' within the
Excellence initiative's Institutional strategy of Georg-August-Universit\"at G\"ottingen.


\end{document}